\newtheorem{theorem}{Theorem}
\newtheorem{theo}{Theorem}
\newtheorem{coro}{Corollary}
\newtheorem{propo}{Proposition}
\newtheorem{theor}{Theorem}
\newtheorem{lem}{Lemma}
\newtheorem*{claim}{Claim}
\theoremstyle{definition}
\theoremstyle{remark}
\newtheorem*{remark}{Remark}
\numberwithin{equation}{section}
\title{Chord-arc curves and the Beurling transform}
\author{K.\ Astala and M.\ J.\ Gonz\'alez}
\date{}
\begin{document}

\maketitle

\baselineskip=6mm
\parskip=2.5mm
\let\thefootnote\relax\footnote {\textit{2010 Mathematics Subject Classification:} 30C62, 42B20, 42B35}
\let\thefootnote\relax\footnote{\textit{Keywords and phrases:} quasiconformal mappings, Beurling transform, chord-arc curves, Carleson measures.}
\begin{abstract}
 We study the relation between the
geometric properties of a quasicircle ~$\Gamma$ and the complex dilatation~$\mu$ of a
quasiconformal mapping that maps the real line onto~$\Gamma$.
Denoting by~$S$ the Beurling transform, we characterize Bishop-Jones quasicircles
in terms of the boundedness of the operator~$(I-\mu S)$ on a
particular weighted $L^2$~space, and chord-arc curves in terms of
its invertibility. As an application we recover the~$L^2$ boundedness of the
Cauchy integral on chord-arc curves.
\end{abstract}

\section*{Introduction}

A global homeomorphism  on the plane $~\rho$ is called quasiconformal if it   preserves orientation, belongs to the Sobolev class $W^{1,2}(\mathbb{C})$, and  satisfies the Beltrami equation
$\overline{\partial}\rho-\mu \partial\rho=0$, where $\mu$ is a measurable function, called the complex dilatation, such that $\|\mu\|_{\infty}<1$.

\noindent Conversely, the mapping theorem
for quasiconformal mappings states that for each function $\mu\in
L^{\infty}(\mathbb{C})$, $\|\mu\|_{\infty}<1$, there exists an essentially unique
quasiconformal mapping on the plane with dilatation~$\mu$.

\noindent We will often use the notation $\rho_\mu$ and $\mu_\rho$ if we need to specify the relation between the mapping and its complex dilatation.

The image of the real line ~$\mathbb{R}$ under a global quasiconformal mapping $\rho$ is called a quasicircle. 
In general the restriction of the mapping $\rho$ to the real line does not
satisfy any regularity conditions such as, for instance, absolute continuity. As well, the quasicircle $\Gamma=\rho~(\mathbb{R})$ might not even be rectifiable, in fact its Hausdorff dimension, though less than 2, can be arbitrarily close to 2. In contrast, if $\mu$ were zero in a neighbourhood of $\mathbb{R}$ then the map would be analytic on $\mathbb{R}$, and $\Gamma$ would be a smooth curve.

\noindent The question that arises naturally when studying quasiconformal mappings is to determine conditions on the complex dilatation $\mu$ that would reflect on the geometric properties of the corresponding quasicircle $\Gamma=\rho~(\mathbb{R})$.

In this paper, we obtain global conditions on the complex dilatations $\mu$ that generate chord-arc curves. Our approach will follow the setting developed  by Semmes
in~\cite{S} to study the chord-arc curves with small constant by applying the strong interactions between quasiconformal mappings and singular integrals.

A locally rectifiable curve $\Gamma$ that passes through $\infty$ is a chord-arc curve if $\ell_{\Gamma}(z_1,z_2)\le
K|z_1-z_2|$ for all $z_1,z_2\in\Gamma$, where $\ell_{\Gamma}(z_1,z_2)$ denotes the length of the shortest arc
of $\Gamma$ joining $z_1$ and $z_2$. The smallest such $K$ is
called the chord-arc constant.

\noindent It is a well known fact that a chord-arc curve is the image of the real line under a bilipschitz mapping on the plane, that is, there exits a
mapping~$\rho\colon \mathbb{C}\to \mathbb{C}$ such that
$\rho(\mathbb{R})=\Gamma$ and
~$C^{-1}|z-w|\le |\rho (z)- \rho(w)|\le C|z-w|$ for all $z,w\in \mathbb{C}$. Bilipschitz mappings preserve Hausdorff dimension, and though they are a very special class of  quasiconformal mappings, no characterization has been found in terms of their complex dilatation. See ~\cite{MOV} for more results on this topic.

The dilatation coefficients whose associated quasicircles are chord-arc curves with small constant are well understood. The general idea is that that if $\mu(z)$ tends to zero when $z$ approaches $\mathbb{R}$, then one expects some close-to-rectifiable behaviour on the quasicircle $\Gamma$. It turns out that a right way  to quantify the smallness of $\mu$ is to consider the measure~$|\mu(z)|^2/|y|~dm$, where $y=\operatorname{Im}z$ and$~dm$ denotes the Lebesgue measure on the plane.

\begin{theorem}[see \cite{AZ,Mc,S}]\label{theoA}
A Jordan curve $\Gamma$ is a chord-arc curve with small constant if and only if
there is a quasiconformal mapping~$\rho\colon \mathbb{C}\to \mathbb{C}$ with
$\rho(\mathbb{R})=\Gamma$ and such that the dilatation~$\mu$ satisfies that
$|\mu|^2/|y|$ is a Carleson measure with small norm.
\end{theorem}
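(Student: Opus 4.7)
The plan is to prove both implications through a common dictionary. On the one hand, by the Fefferman-Stein-Garnett characterization, a function $\varphi\in L^2_{\mathrm{loc}}(\mathbb{R})$ has small BMO norm if and only if $|\nabla P\varphi(z)|^2\, y\, dm(z)$ is a Carleson measure on the upper half-plane with small norm, where $P\varphi$ denotes the Poisson extension. On the other hand, a classical result of Coifman-Meyer identifies the chord-arc curves with small constant as those whose arc-length parametrization $\gamma\colon\mathbb{R}\to\Gamma$ satisfies $\arg\gamma'\in\operatorname{BMO}(\mathbb{R})$ with small norm. These two facts are the pivots of the proof.

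\emph{Necessity.} Assume $\Gamma$ is chord-arc with small constant. Set $\theta=\arg\gamma'$, which has small BMO norm. I would then construct $\rho$ by a Beurling-Ahlfors-type extension built from the Poisson extension $u=P\theta$ on the upper half-plane (and a symmetric extension on the lower half-plane), so that the unit-vector field $e^{iu}$ integrates to a quasiconformal map carrying $\mathbb{R}$ onto $\Gamma$. A direct computation gives $|\mu_{\rho}(z)|\lesssim |y|\,|\nabla u(z)|$ close to $\mathbb{R}$, so that $|\mu_\rho(z)|^2/|y|\lesssim |\nabla u(z)|^2\, y$ is Carleson with small norm by the Fefferman-Stein characterization applied to $\theta$.

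\emph{Sufficiency.} Conversely, assume $\rho$ is quasiconformal with $\rho(\mathbb{R})=\Gamma$ and with $|\mu_\rho|^2/|y|\, dm$ Carleson of small norm. The target is to prove $\rho|_{\mathbb{R}}$ is absolutely continuous with derivative $D$ such that $\log D\in\operatorname{BMO}(\mathbb{R})$ has small norm, for then the Coifman-Meyer characterization yields chord-arc smallness of $\Gamma$. Let $h=\partial\rho$, which satisfies the singular integral equation $h=1+S(\mu h)$. The plan breaks up as (i) establish non-tangential boundary values of $h$ on $\mathbb{R}$ almost everywhere, (ii) identify them with the distributional derivative of $\rho|_{\mathbb{R}}$, and (iii) extract the BMO bound on $\log h|_{\mathbb{R}}$ from the Carleson condition on $|\mu|^2/|y|$.

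Step (iii) is the main obstacle. Formally, the expansion $h=\sum_{n\ge 0}(S\circ M_\mu)^n 1$ represents $\log h$ as an infinite sum built from iterated compositions of the Beurling transform with multiplication by $\mu$; one must control the boundary values of each term using precisely the Carleson hypothesis on $|\mu|^2/|y|$, via a $T(b)$-type or paraproduct estimate tailored to the interaction between $S$ and Carleson measures. This delicate interplay is the heart of the Semmes-Astala-Zinsmeister approach, and it is exactly what the remainder of the paper refines into the cleaner operator-theoretic statement that $I-\mu S$ is invertible on the relevant weighted $L^2$ space, thereby removing the small-constant restriction.
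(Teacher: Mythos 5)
Your necessity sketch is essentially Semmes's original construction and is sound in outline: extending $\theta=\arg\gamma'$ harmonically to $u=P\theta$ and integrating the field $e^{iu}$ does produce a quasiconformal $\rho$ with $|\mu_\rho(z)|\lesssim |y|\,|\nabla u(z)|$ near $\mathbb{R}$, and the Fefferman--Stein square-function characterization then gives the small Carleson norm of $|\mu_\rho|^2/|y|$. This is the quantitative version of what the paper imports as Lemma~\ref{lem2}, so the only criticism here is that the pointwise dilatation bound and the fact that the extension is a homeomorphism (which itself uses the smallness of $\|\theta\|_*$) are asserted rather than verified.

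The sufficiency direction, however, contains a genuine gap. Your step (iii) --- extracting a small BMO bound on the boundary values of $\log\partial\rho$ from the Carleson hypothesis by controlling each term of the Neumann series $h=\sum_{n\ge 0}(S M_\mu)^n 1$ via ``a $T(b)$-type or paraproduct estimate'' --- is precisely the hard part of the implication, and you name it as an obstacle rather than prove it; you then close by deferring to the invertibility of $I-\mu S$ on the weighted space, which is the conclusion of the paper's main theorems rather than an available tool. (There is also an imprecision: smallness of the BMO norm of $\log D$ for the modulus $D=|\rho'|$ alone does not yield chord-arc smallness; the Coifman--Meyer criterion requires control of the full complex logarithm, equivalently of $\arg\gamma'$ for the arc-length parametrization.) The paper's own derivation of this implication is different and essentially two lines: by Theorem~\ref{theo1} the Carleson norm of $|\mu|^2/|y|$ is comparable to the norm of $\mu S$ on $L^2\left(\frac{dm}{|y|}\right)$, so a sufficiently small Carleson norm gives $\|\mu S\|<1$ and hence invertibility of $I-\mu S$ by a Neumann series \emph{in the weighted space}; Theorem~\ref{theo2} (via Theorem~\ref{theo3} and Proposition~\ref{propo1}) then yields that $\Gamma$ is chord-arc. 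The boundary-value estimates your step (iii) would require are exactly what Proposition~\ref{propo1} supplies once that invertibility is known, so your plan is not wrong in spirit, but as written it presupposes the machinery it is meant to establish.
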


For arbitrary constants, this result is no longer true. In fact,
if no restriction on the Carleson norm of $|\mu|^2/|y|$ is
imposed, the quasicircle~$\Gamma$ might not even be rectifiable, though in a sense, they are rectifiable most of the time
on all scales. They are the so called Bishop-Jones quasicircles $(BJ)$, introduced by Bishop and Jones in \cite {BJ} and defined as follows:

A Jordan curve $\Gamma$ is a BJ curve if it is the boundary of a simply
connected domain $\Omega$, and for any $z\in\Omega$, there is a  chord-arc
domain $\Omega_z\subset \Omega$
containing $z$  of ``norm"~$\le k(\Omega)$, with diameter
uniformly comparable (with respect to $z$) to $\operatorname {dist}(z,\partial\Omega)$, and such that
$\mathcal{H}^1(\Gamma\cap
\partial \Omega_z)\ge c(\Omega)\operatorname {dist}(z,\partial\Omega)$.
Here $\mathcal{H}^1$ denotes the one-dimensional Hausdorff measure,and $\Omega_z$ being a chord-arc domain means that its boundary is a chord-arc curve.

\noindent The result in [BJ] states that the boundary of a
simply connected domain $\Omega$  is a BJ curve if and only if $\log \Phi'\in\operatorname{BMOA} (\mathbb{R}^2_+)$, where $\Phi$ is the Riemann map from $\mathbb{R}^2_+$ onto~$\Omega$.

\noindent A BJ curve which is a quasicircle is called a BJ quasicircle. A typical example is a variant of the snowflake where at each iteration step, one of sides of the triangle, for instance the left one, is left unchanged.

\begin{theorem}[see \cite{AZ,Mc}]\label{theoB}
 A Jordan curve $\Gamma$ is a $BJ$ quasicircle if and only if there is a quasiconformal mapping~$\rho\colon \mathbb{C}\to \mathbb{C}$ with
$\rho(\mathbb{R})=\Gamma$, and such that the  dilatation~$\mu$ satisfies that
$|\mu|^2/|y|$ is a Carleson measure.
\end{theorem}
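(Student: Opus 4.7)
The plan is to route the proof through the Bishop-Jones theorem quoted above: $\Gamma$ is a BJ quasicircle if and only if $\log\Phi'\in\operatorname{BMOA}(\mathbb{H})$, where $\Phi$ is a Riemann map from $\mathbb{H}=\mathbb{R}^2_+$ onto a component of $\mathbb{C}\setminus\Gamma$. Combined with the standard Littlewood--Paley description $f\in\operatorname{BMOA}(\mathbb{H}) \iff |f'(w)|^2\operatorname{Im} w\,dm(w)$ is a Carleson measure on $\mathbb{H}$, the theorem reduces to an equivalence between two Carleson-measure conditions on different half-planes: the given one on $|\mu(z)|^2/|y|\,dm(z)$, and its counterpart on $|(\log\Phi')'(w)|^2\operatorname{Im} w\,dm(w)$.

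For the implication from Carleson dilatation to BJ, I would first reduce to the case $\operatorname{supp}\mu\subset\mathbb{H}^-$ by the standard factorization $\rho = F\circ \rho_0$, where $\rho_0$ carries the dilatation $\mu\chi_{\mathbb{H}^-}$ and is hence conformal on $\mathbb{H}$; the factor $F$ absorbs $\mu\chi_{\mathbb{H}^+}$ without affecting the geometry of $\Gamma$ in an essential way. Under this reduction $\Phi := \rho|_\mathbb{H}$ is the Riemann map on one side of $\Gamma$. Differentiating the Beltrami equation $\overline\partial\rho=\mu\partial\rho$ yields an integral identity expressing $(\log \rho')'$ as the result of applying the Beurling transform $S$ to quantities involving $\mu$ and $\partial\mu$ on $\mathbb{H}^-$; the $L^2$ boundedness of $S$ together with the Carleson hypothesis on $|\mu|^2/|y|$ then gives the Carleson bound for $|(\log\Phi')'|^2\operatorname{Im} w\,dm$ on $\mathbb{H}$.

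For the converse, starting from $\log\Phi',\log\Psi'\in\operatorname{BMOA}$ for the two Riemann maps of the complementary components, the task is to exhibit a quasiconformal extension of the conformal welding whose dilatation satisfies the Carleson condition. A natural candidate is an explicit extension built by harmonic averaging of the boundary traces of $\Phi$ and $\Psi$ across a collar of $\mathbb{R}$, analogous to that used in the small-constant case; direct computation of $\overline\partial \rho/\partial\rho$ for this extension expresses $\mu$ in terms of $(\log\Phi')'$ and $(\log\Psi')'$, transporting their Carleson bounds to $|\mu|^2/|y|$.

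The main obstacle is the quantitative passage between these two Carleson conditions across the quasiconformal, and possibly non-rectifiable, correspondence $\rho$. Unlike in Theorem~\ref{theoA}, no smallness of $\|\mu\|_\infty$ or of the Carleson norm is available to invert $I-\mu S$ by Neumann series, so the Beurling-transform identities relating $(\log \rho')'$ to $\mu$ must be treated directly in the weighted space $L^2(|\mu|^2/|y|\,dm)$ developed in this paper, and the error contributions coming from the non-analytic part of $\rho$ must be controlled without any smallness assumption. Establishing the two-sided $L^2$ estimates in this weighted space, and verifying the conformal invariance of the Carleson condition along the map $\Phi$, is where the bulk of the work will concentrate.
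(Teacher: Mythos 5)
This statement is one the paper does not prove: Theorem B is quoted with the attribution ``see \cite{AZ,Mc}'' and is used as a black box (together with Theorem 1) to deduce Corollary 1. So there is no internal proof to compare your attempt against; the relevant benchmark is the argument of Astala\Ndash Zinsmeister and MacManus. Your proposal does identify their strategy correctly at the top level: pass through the Bishop\Ndash Jones characterization $\log\Phi'\in\operatorname{BMOA}(\mathbb{R}^2_+)$ and the Littlewood\Ndash Paley description of $\operatorname{BMOA}$ by Carleson measures, so that the theorem becomes an equivalence between the Carleson condition on $|\mu|^2/|y|\,dm$ and the one on $|(\log\Phi')'|^2\,\operatorname{Im}w\,dm$.

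As written, however, this is a table of contents rather than a proof, and two of its load-bearing steps are genuinely gapped. First, the reduction to $\operatorname{supp}\mu\subset\mathbb{R}^2_-$ via $\rho=F\circ\rho_0$ is not innocuous: with $\mu_{\rho_0}=\mu\chi_{\mathbb{R}^2_-}$ the factor $F$ is conformal only on $\rho_0(\mathbb{R}^2_-)$ and genuinely quasiconformal on the other side of $\rho_0(\mathbb{R})$, so $\Gamma=F(\rho_0(\mathbb{R}))$ is a different curve from $\rho_0(\mathbb{R})$ and the claim that $F$ does not affect its geometry ``in an essential way'' is precisely what must be proved; one has to show that the Carleson condition survives the transfer of dilatations under the factorization (equivalently, that $\log F'$ pulled back by the Riemann map of $\rho_0(\mathbb{R}^2_-)$ lands in $\operatorname{BMOA}$), and without any smallness of $\|\mu\|_\infty$ or of the Carleson norm this cannot be done by a Neumann series for $(I-\mu S)^{-1}$ --- you acknowledge this but do not supply the substitute, which is the heart of \cite{AZ} and \cite{Mc}. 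Second, in the converse direction ``harmonic averaging of the boundary traces of $\Phi$ and $\Psi$'' is not yet a construction: one needs the welding homeomorphism $h=\Psi^{-1}\circ\Phi|_{\mathbb{R}}$ to satisfy $\log h'\in\operatorname{BMO}(\mathbb{R})$, then an explicit Beurling\Ndash Ahlfors/Semmes-type extension whose dilatation is computed pointwise and estimated by the Carleson data of $(\log\Phi')'$ and $(\log\Psi')'$; neither the existence and BMO property of $h'$ nor the dilatation estimate is established by the sketch. Until those two estimates are carried out, the proposal is an accurate outline of the known proof with its hard parts flagged but not executed.
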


Semmes proved Theorem~\ref{theoA} in ~\cite{S} by applying
$L^2$-estimates to a certain perturbed Cauchy integral operator
defined by the pull-back under the quasiconformal mapping of the
Cauchy integral on~$\Gamma$. Those estimates were obtained by
studying the operator~$(\overline{\partial}-\mu\partial)^{-1}$ for
a certain class of $\mu$'s.

\noindent The operator $(\overline{\partial}-\mu\partial)$ can be written as
$(I-\mu S)\overline{\partial}$ where $S$ is the Beurling transform
defined by
$$
Sf(z)=-\frac{1}{\pi}\int_{\mathbb{C}} \frac{f(w)}{(w-z)^2}\,dm(w).
$$

We will show that the boundedness of $(I-\mu S)$ on a certain
$L^2$-weighted space characterizes $BJ$ quasicircles, and that its
invertibility characterizes chord-arc curves.

\noindent More precisely, set
$L^2\left(\frac{dm}{|y|}\right)=\left\{ f\colon \mathbb{C}\to
\mathbb{C}; \, \int_{\mathbb{C}}\frac{|f(z)|^2}{|y|}\,
dm(z)<\infty\right\}$.
\begin{theo}\label{theo1}
Let $\mu\in L^{\infty}(\mathbb{C})$, $\|\mu\|_{\infty}<1$. Then
$|\mu|^2/|y|$ is a Carleson measure if and only if the
operator~$\mu S$ is bounded on $L^2\left(\frac{dm}{|y|}\right)$.
Besides the Carleson norm and the norm of the operator are
equivalent.
\end{theo}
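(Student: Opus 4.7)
The proof reduces at once to the identity
$\|\mu S f\|^2_{L^2(dm/|y|)} = \int_{\mathbb C} |Sf|^2 \, d\nu$ with $d\nu := |\mu|^2|y|^{-1}dm$,
so the theorem is the equivalence between the Carleson norm of $\nu$ and the best constant $C$ in $\int|Sf|^2\, d\nu \le C^2 \int |f|^2 |y|^{-1}\,dm$ for $f\in L^2(dm/|y|)$. The two inequalities would be handled separately.

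For the sufficiency direction (Carleson implies boundedness), I would identify $L^2(dm/|y|)$ with the tent space $T^2_2$ of Coifman--Meyer--Stein and use the tent-space atomic decomposition $f=\sum_j \lambda_j a_j$ with $\sum|\lambda_j|^2\lesssim \|f\|^2$ and each atom $a_j$ supported in a tent $T(Q_j)$ normalized by $\|a_j\|^2_{L^2(dm/|y|)} \le |Q_j|^{-1}$. The task reduces to the uniform bound $\int|Sa|^2\, d\nu \lesssim \|\nu\|_C$ for an atom $a$ on $T(Q)$. Outside a fattened tent $T(2Q)$, Cauchy--Schwarz gives $\int|a|\,dm\lesssim \ell(Q)^{1/2}$, whence the Calder\'on--Zygmund decay of the Beurling kernel yields $|Sa(z)|\lesssim \ell(Q)^{1/2}|z-z_Q|^{-2}$; an annular decomposition of the complement, combined with the Carleson estimate $\nu(T(2^kQ))\le \|\nu\|_C 2^k\ell(Q)$ on dyadic dilates of $Q$, then produces a bound of the required size. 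Inside $T(2Q)$, one combines the $L^2(dm)$-boundedness of $S$ with the local Carleson mass $\nu(T(2Q))\lesssim \|\nu\|_C\ell(Q)$, after a further Whitney subdivision of $T(2Q)$ if necessary.

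For the necessity direction (boundedness implies Carleson), I would test the hypothesis against well-chosen functions. For each Carleson square $Q$ with base on $\mathbb R$ and side $\ell(Q)$, the goal is to construct $f_Q$ such that $|Sf_Q|\ge c_0$ on $T(Q)$ and $\|f_Q\|_{L^2(dm/|y|)}^2\lesssim \ell(Q)$; the hypothesis then immediately gives
$$\int_{T(Q)}\frac{|\mu|^2}{|y|}\,dm \le c_0^{-2}\|\mu S\|^2\,\|f_Q\|^2_{L^2(dm/|y|)} \lesssim \ell(Q),$$
which is the required Carleson bound. A natural construction of $f_Q$ exploits the identity $S\bar\partial = \partial$: take $f_Q = \bar\partial\phi_Q$ with $\phi_Q(z)=(z-x_Q)\psi_Q(z)$, where $\psi_Q$ is a smooth cutoff equal to $1$ on $T(Q)$, supported in a slightly enlarged tent, and arranged so that $\bar\partial\psi_Q$ is supported away from $\mathbb R$. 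Then $Sf_Q=\partial\phi_Q$ is essentially equal to $1$ on $T(Q)$, while $f_Q$ is supported in an annular region where $|y|\sim \ell(Q)$, giving the desired norm control.

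The main obstacle throughout is that the weight $|y|^{-1}$ is not $A_2$, so $S$ itself is \emph{not} bounded on $L^2(dm/|y|)$; the Carleson condition on $|\mu|^2/|y|$ is precisely the quantitative cancellation near $\mathbb R$ that rescues boundedness of the product $\mu S$. The most delicate step is therefore the local estimate on $T(2Q)$ in the sufficiency argument: pointwise bounds on $|\mu|$ are useless there (they give the divergent integral $\int dm/|y|$), and one must use the full Carleson condition to absorb the singularity of the weight.
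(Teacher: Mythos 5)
Your plan correctly isolates where the difficulty lives (the weight $1/|y|$ is not locally integrable, so everything happens near $\mathbb{R}$), but both directions contain genuine gaps. In the necessity direction, the cutoff you describe cannot exist: a continuous $\psi_Q$ equal to $1$ on $T(Q)$ (whose closure contains the base interval $Q\subset\mathbb{R}$) and vanishing outside a slightly larger tent must transition from $1$ to $0$ along the real segments between $Q$ and the larger base, hence $\bar\partial\psi_Q$ is nonzero at points arbitrarily close to $\mathbb{R}$ and is \emph{not} supported away from $\mathbb{R}$. Since the transition occurs within a horizontal distance $\lesssim\ell(Q)$ at every height $y\downarrow 0$, one checks that $\int|f_Q|^2\,dm/|y|$ diverges logarithmically, so $f_Q=(z-x_Q)\bar\partial\psi_Q$ is not in $L^2(dm/|y|)$ at all. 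The paper instead tests with $f=\chi_{\tilde B_0}$, $\tilde B_0=B(x_0+2ir,r)$, for which $Tf$ and hence $Sf=r^2(z-z_0)^{-2}\chi_{\mathbb{C}\setminus\tilde B_0}$ are explicit; the bump lives at height $\simeq r$ (so its norm is $\simeq r$), and the explicit formula shows $|Sf|\simeq 1$ on $B(x_0,r)$ all the way down to $y=0$. Your idea is salvageable only if you likewise place the source at height $\sim\ell(Q)$ and then prove separately that $Sf_Q$ stays bounded below throughout the Carleson box.

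In the sufficiency direction there are two problems. First, an atomic decomposition with $\ell^2$ coefficients does not by itself give boundedness into $L^2(\nu)$: the uniform atom estimate controls only the diagonal of $\bigl\|\sum_j\lambda_j Sa_j\bigr\|^2_{L^2(\nu)}$, and you still need almost-orthogonality of the images $Sa_j$ in $L^2(\nu)$ — a Schur-type test on tent interactions, which is essentially what the paper's Lemma~\ref{lem1} and its $I_1,I_2,I_3$ splitting accomplish, and not something the decomposition hands you for free. Second, and more seriously, the local estimate on $T(2Q)$, which you yourself single out as the crux, is never actually supplied: combining the $L^2(dm)$ bound for $S$ with $\nu(T(2Q))\lesssim\|\nu\|_C\,\ell(Q)$ proves nothing because the density $|\mu|^2/|y|$ is unbounded, and a Whitney subdivision of $T(2Q)$ leaves you with $\sum_R\ell(R)^{-1}\int_R|Sa|^2\,dm$ with no gain in $\ell(R)$. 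The paper closes exactly this step by splitting according to the position of the source point $w$ relative to $z$: (i) $w$ in the same Whitney cube as $z$, where $1/|y|$ is essentially constant and the $L^2$ isometry of $S$ applies cube by cube; (ii) $w$ in the opposite half-plane, where $S(f\chi_{\mathbb{R}^2_-})$ is holomorphic on $\mathbb{R}^2_+$, so the Carleson embedding for $H^2$ plus Green's formula reduce the estimate to the Schur lemma; (iii) $w$ in the same half-plane but outside the local cube, where the reflected-kernel identity $(w-z)^{-2}-(w-\bar z)^{-2}=4i(w-x)y\,(w-z)^{-2}(w-\bar z)^{-2}$ produces the extra factor of $y$ that cancels the singular weight. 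Some such analyticity or cancellation input is indispensable; without it your near-field estimate cannot close.
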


\noindent The proof of Theorem 1 involves  showing the boundedness of the Beurling transform from the space $L^2\left(\frac{dm}{|y|}\right)$ to the space $L^2\left(\frac{|\mu|^2}{|y|}~dm\right)$. This is  a two weight problem, where the weight $1/|y|$ is not even locally integrable.

As an immediate consequence of Theorems~\ref{theo1} and
\ref{theoB}, we get the following corollary:

\begin{coro}\label{coro1}
The curve $\Gamma$ is a $BJ$~quasicircle if and only if there exists
a quasiconformal mapping~$\rho\colon \mathbb{C}\to \mathbb{C}$ with $\rho(\mathbb{R})=\Gamma$, and
such that  its  dilatation coefficient~$\mu$ satisfies that the operator~$(I-\mu S)$ is bounded in
$L^2\left(\frac{dm}{|y|}\right)$.
\end{coro}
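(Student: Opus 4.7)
The statement is billed as an immediate consequence of Theorem~\ref{theo1} and Theorem~\ref{theoB}, so my plan is simply to chain the two equivalences, the only genuine observation being that boundedness of $I-\mu S$ on $L^2(dm/|y|)$ is the same as boundedness of $\mu S$ on that space.

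First I would note that $I$ is bounded on any normed space (with norm~$1$), so $(I-\mu S)$ is bounded on $L^2(dm/|y|)$ if and only if $\mu S$ is bounded on $L^2(dm/|y|)$. In particular, the two operator norms differ by at most~$1$, which will give the quantitative equivalence if needed.

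For the forward implication, I would take $\Gamma$ a $BJ$ quasicircle and apply Theorem~\ref{theoB} to obtain a quasiconformal $\rho$ with $\rho(\mathbb{R})=\Gamma$ whose dilatation $\mu$ satisfies that $|\mu|^2/|y|\,dm$ is a Carleson measure. Feeding this $\mu$ into Theorem~\ref{theo1}, the operator $\mu S$ is bounded on $L^2(dm/|y|)$, and hence so is $I-\mu S$. For the converse, starting from a $\rho$ whose dilatation $\mu$ makes $I-\mu S$ bounded on $L^2(dm/|y|)$, the preliminary remark gives that $\mu S$ is bounded on $L^2(dm/|y|)$; Theorem~\ref{theo1} then supplies the Carleson condition on $|\mu|^2/|y|$, and Theorem~\ref{theoB} concludes that $\Gamma=\rho(\mathbb{R})$ is a $BJ$ quasicircle.

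There is essentially no obstacle at this stage, since the substantive analytic content has already been absorbed into Theorem~\ref{theo1} (the two-weight bound for~$S$) and Theorem~\ref{theoB} (the characterization of $BJ$ quasicircles via Carleson dilatations). The only mild point one might want to record explicitly is that passing from $\mu S$ to $I-\mu S$ preserves boundedness in both directions without any smallness hypothesis on $\|\mu\|_\infty$ or on the Carleson norm, which is why no analogue of the "small constant" restriction appears in the corollary.
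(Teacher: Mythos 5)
Your argument is exactly the one the paper intends: the corollary is stated there as an immediate consequence of Theorem~\ref{theo1} and Theorem~\ref{theoB}, obtained by chaining the two equivalences, with the only observation being that boundedness of $I-\mu S$ on $L^2\left(\frac{dm}{|y|}\right)$ is equivalent to boundedness of $\mu S$ since the identity is trivially bounded. Your proposal is correct and follows the same route.
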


\begin{theo}\label{theo2}
Let $\Gamma$ be a quasicircle analytic at~$\infty$. Then $\Gamma$
is a chord-arc curve if and only if there exists a quasiconformal
mapping~$\rho\colon \mathbb{C}\to \mathbb{C}$ with  $\rho (\mathbb{R})=\Gamma$, and such that its  dilatation coefficient ~$\mu$ is compactly supported and satisfies that
the operator~$(I-\mu S)$ is
invertible in $L^2\left(\frac{dm}{|y|}\right)$. 
\end{theo}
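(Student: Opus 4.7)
The approach is to establish an operator-level correspondence between invertibility of $(I-\mu S)$ on $L^2(dm/|y|)$ and $L^2(\Gamma,ds)$-boundedness of the Cauchy integral $C_\Gamma$. Combined with the theorem of Coifman--McIntosh--Meyer (chord-arc implies $C_\Gamma$ bounded) and its converse in the form of David's theorem, this yields both directions of the equivalence.

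For the ``only if'' direction, given $\Gamma$ chord-arc and analytic at $\infty$, I would build a quasiconformal $\rho:\mathbb{C}\to\mathbb{C}$ with $\rho(\mathbb{R})=\Gamma$ and compactly supported~$\mu$, by quasiconformally welding the two complementary domains of $\Gamma$ and truncating against the smooth behaviour at $\infty$. Since $C_\Gamma$ is $L^2(\Gamma,ds)$-bounded by Coifman--McIntosh--Meyer, the correspondence below transfers this to invertibility of $(I-\mu S)$. Conversely, if such $\rho$ exists with $(I-\mu S)$ invertible, then mere boundedness already yields $BJ$ via Corollary~\ref{coro1}, while invertibility promotes this to $L^2(\Gamma,ds)$-boundedness of $C_\Gamma$; David's theorem then gives chord-arc, once Ahlfors regularity has been established.

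The correspondence itself is anchored in the identity $\overline{\partial}-\mu\partial = (I-\mu S)\overline{\partial}$. Applied to $u:=\rho-\mathrm{id}$, which is globally defined since $\mu$ has compact support, this gives $\overline{\partial}u=(I-\mu S)^{-1}\mu$. More generally, given data $g$ on $\mathbb{R}$ (pulled back from $\Gamma$), one solves a jump problem of the form $(I-\mu S)h=F(g)$ in $L^2(dm/|y|)$, obtains $f=Ch$ holomorphic off $\mathbb{R}$, and reads off its boundary values; pushed forward by $\rho$, these realise $C_\Gamma$ applied to the pushforward of $g$. The weight $1/|y|$ is chosen precisely so that traces from $L^2(dm/|y|)$ land in $L^2(\mathbb{R})$, matching the usual $L^2$-theory of singular integrals on curves.

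The main obstacle, I expect, is making this trace-and-pullback correspondence quantitative: $1/|y|$ is not locally integrable near $\mathbb{R}$, so the trace map must be defined through a careful limiting procedure, and one has to track how $(I-\mu S)$ transforms under the pullback by $\rho$. A secondary difficulty is that boundedness of $(I-\mu S)$ only yields the $BJ$ property, so one must use the full invertibility assumption (together with the compact support of $\mu$) to upgrade to Ahlfors regularity before invoking David's theorem.
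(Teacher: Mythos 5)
Your proposal rests on an ``operator-level correspondence'' between invertibility of $(I-\mu S)$ on $L^2\left(\frac{dm}{|y|}\right)$ and $L^2(\Gamma,ds)$-boundedness of $C_\Gamma$, and in both directions this correspondence is weaker than what you need. In the forward direction, invertibility is the estimate \eqref{eq3.1} for \emph{arbitrary} compactly supported $\Phi\in L^2\left(\frac{dm}{|y|}\right)$, whereas the jump-problem dictionary you describe only produces right-hand sides of the special form $\mu C'_f$ with $f\in L^2(\mathbb{R})$; boundedness of $C_\Gamma$ (Coifman--McIntosh--Meyer/David) therefore does not yield the inverse bound for general $\Phi$. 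The paper instead takes Semmes's bilipschitz extension (Lemma~\ref{lem2}), performs the change of variables $u=H\circ\rho^{-1}$ to reduce \eqref{eq3.1} to the two-weight inequality \eqref{eq3.3}--\eqref{eq3.4}, i.e.\ to boundedness of $\tilde\mu S$ on $L^2\left(\frac{dm}{\operatorname{dist}(w,\Gamma)}\right)$, and proves that by redoing the proof of Theorem~\ref{theo1bis} relative to $\Gamma$, using Zinsmeister's Carleson embedding for $\mathbf{H}^2$ of chord-arc domains, the Jerison--Kenig square-function identity, and a quasiconformal reflection in place of $z\mapsto\bar z$. (Note also that the paper obtains David's $L^2(\Gamma)$ bound as Corollary~\ref{coro2bis} \emph{of} Theorem 2, so importing it as the engine of the proof inverts the paper's logic even where it is not strictly circular.)

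In the converse direction the gap is sharper. What Proposition~\ref{propo1} actually delivers from invertibility is $L^2(\mathbb{R},dx)$-boundedness of the pulled-back operator $Kf(x)=\mathrm{P.V.}\int_{\mathbb{R}}\frac{f(y)}{\rho(y)-\rho(x)}\rho'(y)\,dy$. This is \emph{not} the same as $L^2(\Gamma,ds)$-boundedness of $C_\Gamma$ unless the parametrization $\rho|_{\mathbb{R}}$ is bilipschitz --- which is unknown at this stage and in general false (Proposition~\ref{propo2}). So you cannot hand the problem to David's theorem; the passage from $L^2(dx)$ to $L^2(ds)$ requires exactly the control of $|\rho'|$ that you are trying to prove. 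The paper closes this loop with Semmes's test-function argument: first Theorem~\ref{theo3bis} (itself a nontrivial duality/Fourier computation your sketch does not supply) gives $\overline{\partial}\rho=(I-\mu S)^{-1}\mu\in L^2\left(\frac{dm}{|y|}\right)$, hence rectifiability and $\rho'|_{\mathbb{R}}\in L^2_{loc}$; then one tests $K$ on $f=\overline{\rho'}\chi_I$, evaluates on a comparable interval $J$ with $\operatorname{dist}(I,J)\simeq|I|$, and uses quasiconformal distortion to obtain $\int_I|\rho'|^2\,dy\le c\,|\rho(a)-\rho(b)|^2/|a-b|$ and, by Cauchy--Schwarz, $\int_I|\rho'|\le c\operatorname{diam}\rho(I)$, which is the chord-arc condition directly --- no appeal to Ahlfors regularity or to David's theorem is needed or available. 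You would need to replace your David-theorem step with an argument of this kind.
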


We will show as part of the proof of  Theorem 2 that $\rho$ being  bilipschitz,
   with $\mu=\mu_\rho$ satisfying that $|\mu|^2/|y|$ is a Carleson measure, is a sufficient condition to assure the invertibility of the operator $(I-\mu S)$ in  $L^2\left(\frac{dm}{|y|}\right)$. On the other hand, in the last section we construct a quasiconformal mapping that shows that  the converse does not hold. The characterization of the quasiconformal mappings $\rho$ for which the operator $(I-\mu_\rho S)$ is invertible in $L^2\left(\frac{dm}{|y|}\right)$ remains an open question.

Note that as a consequences of Theorems 1 and 2 we recover the result on chord-arc curves with small constant presented in Theorem A. For that, let the Carleson norm of the measure $|\mu|^2/|y|$ be small enough, then by Theorem 1, the norm of the operator $\mu S $ in  $L^2\left(\frac{dm}{|y|}\right)$ is less than 1. Therefor ~$(I-\mu S)$ is invertible in $L^2\left(\frac{dm}{|y|}\right)$, and by Theorem 2 the associated quasicircle is chord-arc.

Our next result, that will be needed in the proof of Theorem 2, gives  a new sufficient condition for a quasicircle to be rectifiable.

 \begin{theo}\label{theo3}
 Let~$\rho\colon\mathbb{C}\to \mathbb{C}$ be a quasiconformal mapping, analytic at~$\infty$, such that $\int_{\mathbb{C}}{\frac{|\bar{\partial}\rho|^2}{|y|}}dm<\infty$. Then $\Gamma=\rho(\mathbb{R})$ is rectifiable and $\rho'|_{\mathbb{R}}\in L^2_{loc}$.
 \end{theo}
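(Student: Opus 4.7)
The plan is to represent $\rho$ via the Cauchy transform as $\rho(z)=z+\phi(z)$ with $\phi=\mathcal{C}h$, $h:=\bar{\partial}\rho$, and extract the trace of $\phi$ on $\mathbb{R}$ by a Fourier/Plancherel analysis using an upper/lower half-plane splitting. First I would normalize so that $\rho$ is the principal solution, i.e.\ $\rho(z)=z+o(1)$ as $z\to\infty$; this is possible because $\rho$ is analytic at $\infty$ (so $\mu$ is compactly supported), and is achieved by pre- and post-composition with affine maps, under which the hypothesis is invariant. Then $h=\bar\partial\rho$ has compact support, say in $\{|y|\le R\}$, and the pointwise inequality $1\le R/|y|$ on the support gives $h\in L^2(\mathbb{C})$. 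Consequently $\bar\partial\phi=h\in L^2$ and $\partial\phi=Sh\in L^2$, so $\phi\in W^{1,2}_{\mathrm{loc}}$.

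The heart of the argument is to show that $\phi|_{\mathbb{R}}\in H^1(\mathbb{R})$ with norm controlled by $\int|h|^2/|y|\,dm$. Write $h=h_++h_-$ according to the sign of $\operatorname{Im}w$ and set $\phi_\pm:=\mathcal{C}h_\pm$; then $\phi_+$ is holomorphic in the open lower half-plane and $\phi_-$ in the upper. Denote by $\widehat{f}(\xi,y)$ the partial Fourier transform of $f(x,y)$ in the variable $x$. For $z=x-is$ with $s>0$, the identity $\mathcal{F}_x[(x-ia)^{-1}](\xi)=2\pi i\,e^{a\xi}\mathbf{1}_{\xi<0}$, valid for $a>0$, yields
\[\widehat{\phi_+}(\xi,-s)=2i\,e^{s\xi}\,\mathbf{1}_{\xi<0}\int_0^\infty \widehat{h_+}(\xi,v)\,e^{v\xi}\,dv.\]
Letting $s\downarrow 0$ to obtain the boundary value $\widehat{\phi_+}(\xi,0)$ and applying Cauchy\Ndash Schwarz to the inner integral with weight $\sqrt v$ gives, for $\xi<0$,
\[\xi^2\bigl|\widehat{\phi_+}(\xi,0)\bigr|^2\le C\int_0^\infty\frac{|\widehat{h_+}(\xi,v)|^2}{v}\,dv,\]
the decisive cancellation coming from $\int_0^\infty v\,e^{2v\xi}dv=1/(4\xi^2)$ for $\xi<0$. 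Integrating in $\xi$ and invoking Plancherel in $x$ on both sides yields $\|(\phi_+|_{\mathbb R})'\|_{L^2(\mathbb{R})}^2\le C\int_{\mathbb{R}^2_+}|h_+|^2/v\,dm$; the bound for $\phi_-$ is symmetric, and summing gives $\|(\phi|_{\mathbb R})'\|_{L^2(\mathbb R)}^2\le C\int_{\mathbb C}|h|^2/|y|\,dm$.

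With this estimate in hand, $\rho'|_{\mathbb R}=1+(\phi|_{\mathbb R})'\in L^2_{\mathrm{loc}}(\mathbb R)$ is immediate, and rectifiability follows because $\rho$ is a homeomorphism of $\widehat{\mathbb C}$ fixing $\infty$: every bounded subarc of $\Gamma=\rho(\mathbb R)$ is contained in $\rho([-N,N])$ for some $N$, and Cauchy\Ndash Schwarz bounds its length by $2N+\sqrt{2N}\,\|(\phi|_{\mathbb R})'\|_{L^2(\mathbb R)}<\infty$.

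The main obstacle I anticipate is the rigorous justification of the pointwise trace on $\mathbb{R}$: a priori $\phi\in W^{1,2}_{\mathrm{loc}}$ has only a distributional trace, and $h$ need not vanish on $\mathbb R$, so the Fourier computation cannot be carried out directly for $\phi$ itself. The half-plane decomposition is exactly what circumvents this: each $\phi_\pm$ is holomorphic in the complementary open half-plane and has well-defined nontangential boundary values, and the $H^1$ estimate that emerges automatically upgrades those values to H\"older-$1/2$ functions of $x$.
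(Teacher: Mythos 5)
Your proof is correct, but it follows a genuinely different route from the paper's. The paper never computes the trace of $\rho$ on $\mathbb{R}$ directly: it shows instead that the difference quotients $\frac{1}{h}(\rho(x+h)-\rho(x))$ are uniformly bounded in $L^2_{\mathrm{loc}}$ by a duality argument, pairing them against a test function $g\in L^2(\mathbb{R})$, moving the pairing to the plane via the Fubini identity of Lemma~\ref{lem4}, transferring the difference operator onto $C_g$, applying Cauchy\Ndash Schwarz with the weights $1/|y|$ and $|y|$, and finally evaluating $\int_{\mathbb{C}}|y|\,\bigl|\frac{1}{h}(C_g(z-h)-C_g(z))\bigr|^2\,dm$ by Green's formula as $\|K_h\ast g\|_2^2$ with $K(x)=2\log|\frac{1+x}{x}|$, whose Fourier transform is bounded. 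You instead split $\bar{\partial}\rho$ by half-planes and estimate the trace of each Cauchy extension directly through the partial Fourier transform in $x$, the weighted Cauchy\Ndash Schwarz step $\int_0^\infty v\,e^{2v\xi}\,dv=1/(4\xi^2)$ playing exactly the role that $\|\widehat{K}\|_\infty<\infty$ plays in the paper; your computation checks out and in fact yields the slightly stronger global bound $\|(\rho|_{\mathbb{R}})'-1\|^2_{L^2(\mathbb{R})}\lesssim\int_{\mathbb{C}}|\bar{\partial}\rho|^2|y|^{-1}\,dm$ with explicit constant. What the paper's duality buys is that the trace issue you flag at the end never arises: difference quotients of the continuous function $\rho$ are unproblematic, and a uniform $L^2_{\mathrm{loc}}$ bound on them gives $\rho'|_{\mathbb{R}}\in L^2_{\mathrm{loc}}$ by weak compactness. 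In your version one more line is needed to glue the two one-sided boundary values, i.e.\ to check that $\rho(x)-x=\phi_+(x)+\phi_-(x)$ on $\mathbb{R}$ when $\phi_+$ is evaluated from below and $\phi_-$ from above; this follows, for instance, because $T(\bar{\partial}\rho\,\chi_{\mathbb{R}^2_-})\in W^{1,2}_{\mathrm{loc}}$ is absolutely continuous on almost every vertical line, so its two continuous one-sided boundary values coincide a.e.\ and hence everywhere. With that observation your argument is complete.
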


Finally, as an application of Theorem~\ref{theo2}, we will recover the following well known result due to G. David~\cite{D}:

\begin{coro}\label{coro2}
If $\Gamma$ is a chord-arc curve, the Cauchy integral on~$\Gamma$
is a bounded operator on $L^2(\Gamma)$.
\end{coro}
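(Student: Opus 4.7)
The plan is to deduce the Cauchy-integral bound from the invertibility of $(I-\mu S)$ on $L^2\!\left(\tfrac{dm}{|y|}\right)$ supplied by Theorem~\ref{theo2}, via a Riemann--Hilbert construction for $\mu$-holomorphic functions.

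First, after a bilipschitz straightening of $\Gamma$ outside a large disk (which preserves the chord-arc property) I may assume $\Gamma$ is analytic at infinity, and then fix a bilipschitz parametrisation $\rho\colon\mathbb{C}\to\mathbb{C}$ with $\rho(\mathbb{R})=\Gamma$ (possible since chord-arc curves are bilipschitz images of $\mathbb{R}$). This gives $\mu=\mu_\rho$ compactly supported, $|\mu|^2/|y|$ a Carleson measure (Theorem~\ref{theoB}), and $(I-\mu S)$ invertible on $L^2\!\left(\tfrac{dm}{|y|}\right)$ by Theorem~\ref{theo2}. Since $\rho|_\mathbb{R}$ is bilipschitz, the boundedness of the Cauchy integral on $L^2(\Gamma)$ is equivalent to the $L^2(\mathbb{R})$-boundedness of the pull-back
\[
T_\rho\phi(x)=\frac{1}{2\pi i}\,\mathrm{p.v.}\!\int_\mathbb{R}\frac{\rho'(t)\phi(t)}{\rho(t)-\rho(x)}\,dt.
\]

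For $\phi\in L^2(\mathbb{R})$ smooth and compactly supported, I would realise $T_\rho\phi$ (modulo the $\pm\tfrac12\phi$ Plemelj term) as the trace on $\mathbb{R}$ of a $\mu$-holomorphic function $G$ on $\mathbb{C}\setminus\mathbb{R}$ with $G(\infty)=0$ and jump $G_+-G_-=\phi$ across $\mathbb{R}$. Decomposing $G=G_0+h$ with $G_0(z)=\tfrac{1}{2\pi i}\int_\mathbb{R}\phi(t)/(t-z)\,dt$ the classical Cauchy integral, whose traces $\tfrac12\phi\mp\tfrac{i}{2}H\phi$ already lie in $L^2(\mathbb{R})$ by boundedness of the Hilbert transform, the correction $h$ must be continuous across $\mathbb{R}$, vanish at infinity, and solve $\bar\partial h-\mu\partial h=\mu\partial G_0$ on all of $\mathbb{C}$. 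Using $\partial=S\bar\partial$ on functions vanishing at infinity, this rewrites as $(I-\mu S)\bar\partial h=\mu\partial G_0$, and the invertibility provided by Theorem~\ref{theo2} delivers $\bar\partial h\in L^2\!\left(\tfrac{dm}{|y|}\right)$ with norm $\lesssim\|\mu\partial G_0\|_{L^2(dm/|y|)}$, provided the right-hand side is finite.

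The crux is then the estimate
\[
\int_\mathbb{C}|\partial G_0|^2\,\frac{|\mu|^2}{|y|}\,dm\;\lesssim\;\|\phi\|_{L^2(\mathbb{R})}^2,
\]
which I would prove by combining the Littlewood--Paley area identity $\int|\partial G_0|^2|y|\,dm\simeq\|\phi\|_{L^2(\mathbb{R})}^2$ with the Carleson property of $|\mu|^2/|y|$ (Theorem~\ref{theo1}) through a tent-space pairing. Once this is in hand, the interior bound $\bar\partial h\in L^2(dm/|y|)$ must be converted into $h|_\mathbb{R}\in L^2(\mathbb{R})$ by an argument in the spirit of Theorem~\ref{theo3}, adapted from the quasiconformal map to the Beltrami-type solution $h$ and using the compact support of $\mu$ for control at infinity. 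Adding back the explicit traces of $G_0$ yields $\|G_\pm\|_{L^2(\mathbb{R})}\lesssim\|\phi\|_{L^2(\mathbb{R})}$, equivalently the Cauchy-integral bound on $L^2(\Gamma)$. The principal obstacle is the displayed Carleson-type embedding: since for $\phi\in L^2$ the holomorphic derivative $\partial G_0$ has no $L^2$-trace on $\mathbb{R}$, a direct Carleson embedding of $|\mu|^2/|y|$ is not available, and the estimate must be routed through area-integral identities; a closely related subtlety is the boundary-trace recovery for $h$, calibrated closely to the proof of Theorem~\ref{theo3}.
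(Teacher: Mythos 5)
Your overall architecture is the right one and matches the paper's (Semmes's scheme: pull back the Cauchy integral by a bilipschitz $\rho$, subtract the flat Cauchy integral $C_\phi$, and control the correction $h$ through the equation $\overline{\partial}h-\mu\partial h=\mu\,\partial G_0$ and the invertibility of $I-\mu S$). But the step you yourself identify as the crux is a genuine gap, and in the form you state it it is false. The embedding
$$
\int_{\mathbb{C}}|\partial G_0|^2\,\frac{|\mu|^2}{|y|}\,dm\;\lesssim\;\|\phi\|_{L^2(\mathbb{R})}^2
$$
cannot follow from the Carleson property of $|\mu|^2/|y|$ alone: take $\phi=\chi_{[0,\varepsilon]}$ and $|\mu|=\tfrac12$ on the box $Q_\varepsilon=[0,\varepsilon]\times[\varepsilon,2\varepsilon]$ (so $|\mu|^2/|y|\,dm$ is Carleson with norm $\simeq 1$ uniformly in $\varepsilon$). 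Then $\partial G_0=\frac{1}{2\pi i}\,\frac{\varepsilon}{z(z-\varepsilon)}$ has modulus $\simeq\varepsilon^{-1}$ on $Q_\varepsilon$, so the left side is $\simeq\varepsilon^{-1}$ while $\|\phi\|_2^2=\varepsilon$; the constant blows up like $\varepsilon^{-2}$. The Littlewood--Paley identity controls $\int|\partial G_0|^2\,|y|\,dm$, with the weight $|y|$, and no tent-space pairing converts that into control against the weight $|\mu|^2/|y|$, which is larger by the factor $|\mu|^2/|y|^2$ near $\mathbb{R}$. Consequently you cannot apply the invertibility of $I-\mu S$ directly to $\Phi=\mu\,\partial G_0$, since that $\Phi$ need not lie in $L^2(dm/|y|)$.

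The paper avoids exactly this obstruction in Proposition~\ref{propo1} by never estimating $\|(I-\mu S)^{-1}(\mu C_f')\|_{L^2(dm/|y|)}$ in isolation. Instead it bounds the boundary trace by duality, $\int_{\mathbb{R}}H\,h\,dx=2i\int_{\mathbb{C}}\overline{\partial}H\cdot C_h\,dm$ (Lemma~\ref{lem4}), and then uses the algebraic identity $(I-\mu S)^{-1}\mu=\mu(I-S\mu)^{-1}$ together with the transpose of the resolvent (with respect to the bilinear pairing $\int fg\,dm$) to move $(I-\mu S)^{-1}$ onto $\mu C_h$. Since $C_h$ \emph{does} lie in $H^2$ of each half-plane, $\mu C_h\in L^2(dm/|y|)$ by the genuine Carleson embedding, and the factor left on the other side of Cauchy--Schwarz is $\int|C_f'|^2\,|y|\,dm\lesssim\|f\|_2^2$, with the favourable weight. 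This single duality computation also delivers the $L^2(\mathbb{R})$ bound on the trace, so the second loose end in your plan (passing from $\overline{\partial}h\in L^2(dm/|y|)$ to $h|_{\mathbb{R}}\in L^2(\mathbb{R})$ ``in the spirit of Theorem~\ref{theo3}'', which in any case only yields local derivative bounds) is handled simultaneously. In short: your two-step plan separates the resolvent estimate from the trace estimate, and each step fails separately; the transposition inside the duality pairing is the missing idea.
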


Our main motivation to study these questions has been the open problem on the connectivity of the manifold of
chord-arc curves. The topology on this manifold is defined by
$d(\Gamma_1,\Gamma_2)=\|\log|\Phi'_1|-\log
|\Phi'_2|\|_{\operatorname{BMO}(\mathbb{R})}$, where $\Phi_i, ~i=1,2$
represent the corresponding Riemann mappings from~$\mathbb{R}_2^+$ onto the
domains~$\Omega_i$ bounded by~$\Gamma_i, ~i=1,2$.

\noindent It was proved in~\cite{AZ} that, with this topology, the larger space of $BJ$~quasicircles is connected. The idea
was to show that $\mu \to t\mu$, $0\le t\le 1$ gives a continuous
deformation. One has to be more careful in the case of chord-arc curves as Bishop showed in ~\cite{B}. He constructed 
a quasiconformal map $\rho~$  of the disk to itself such
that the quasiconformal mapping corresponding to the dilatation $\frac{1}{2} ~\mu_\rho$, maps the circle to a curve
of Hausdorff dimension~$>~ 1$. 

The characterization of chord-arc curves given in Theorem 2 provides a new approach to the connectivity problem by translating it into a question regarding the spectrum of a singular operator in a weighted $L^2$ space.

The paper is structured as follows: In Section~\ref{sec1} we
review some definitions and basic facts. Section~\ref{sec2} is
devoted to the proof of Theorem~\ref{theo1}. In Section~\ref{sec3}
we study some well-behaved quasiconformal mappings and use them to
prove that the chord-arc condition implies the invertibility of the operator in Theorem~\ref{theo2}.  In Section~\ref{sec4} we prove Theorem 3, and describe Semmes's
approach to solve Theorem~\ref{theoA}. We show how these ideas are involved
in proving the other implication in Theorem~\ref{theo2}.
We will finish with some remarks and
 proofs of the remaining results in
Section~\ref{sec5}.

In the paper, the letter $C$ denotes a constant that may change at different
occurrences. The notation $A \simeq B$ means that there is a constant $C$  such that $1/C.A\leq B \leq C.A$. The notation $A\lesssim  B$ ( $A\gtrsim B$)
means that there is a constant $C$  such that $A \leq C.B$ ($A\geq C.B$). Also, as usual, $B(z_0,R)$ denotes a ball of radius $R$ centered at the point $z_0\in\mathbb{C}$. If $B$ is a ball, $2B$ denotes the ball with
the same center as $B$ and twice the radius of $B$, and similarly for squares.

\section{Basic facts and definitions}\label{sec1}

A locally integrable function~$f$ belongs to the
space~$\operatorname{BMO}(\mathbb{R})$ if
$$
\|f\|_*=\sup_I\frac{1}{|I|} \int_I |f(x)-f_I|\,dx<\infty
$$
where $I$ is any interval and
$$
f_I=\frac{1}{|I|}\int_I f(y)\,dy.
$$

If $f$ is analytic on~$\mathbb{R}_+^2$ with boundary values
$f(x)\in\operatorname{BMO}(\mathbb{R})$, we say that
$f\in\operatorname{BMOA}(\mathbb{R}^2_+)$.

A positive measure~$\sigma$ on $\mathbb{C}$ is called a Carleson
measure (relative to $\mathbb{R}$) if for each $R>0$ and $x\in
\mathbb{R}$, $\sigma (\{w: |w-x|\le R\})\le CR$. The smallest
such $C$ is called the Carleson norm of~$\sigma$, $\|\sigma\|_C $. If we replace
the condition~$x\in \mathbb{R}$ by $x\in\Gamma$ for some fixed
curve~$\Gamma$ then we say that $\sigma$ is a Carleson measure
with respect to~$\Gamma$.

Let $\Gamma$ be an oriented rectifiable Jordan curve that passes
through $\infty$, and let $\Omega_+$ and $\Omega_-$ denote its
complementary regions. Given a function~$f$ on~$\Gamma$, define
its Cauchy integral $F(z)=C_{\Gamma}f(z)$ off $\Gamma$ by
$$
F(z)=\frac{1}{2\pi i} \int_{\Gamma}\frac{f(w)}{w-z}\,dw,\qquad
z\notin\Gamma.
$$

The boundary values of $F_{\pm}=F\bigr\rvert_{\Omega_{\pm}}$ exist almost
everywhere, with respect to the one-dimensional Hausdorff measure~$\mathcal{H}^1$. Denoting them by $f_{+}$ and $f_{-}$, the classical
Plemelj formula states that
$$
f_{\pm}(z)=\pm\frac{1}{2}f(z)+\frac{1}{2\pi i}\text{
P.V.}\int_{\Gamma}\frac{f(w)}{w-z}\,dw, \qquad \mathcal{H}^1\text{-a.a.\ $z\in\Gamma$}.
$$

The singular integral is also called the Cauchy integral. In
particular, the jump of~$F$ across $\Gamma$, defined by $f_+-f_-$,
is equal to~$f$. This property and the analyticity of $F$
off~$\Gamma$ are expressed in the equations:
\begin{align*}
\overline{\partial}F&=f(z)\,dz\\*[3pt]
\partial F&=F'_+\chi_{\Omega_+}+F'_-\chi_{\Omega_-}-f(z)\,dz
\end{align*}
interpreted in the sense of distributions. Later on, we
will consider expressions involving multiplication of the
derivatives of the Cauchy transform by quasiconformal coefficients
$\mu\in L^{\infty}(\mathbb{C})$. Since $\mu$ is defined only a.e.,
we need to introduce the notation
$$
F'=\chi_{\Omega_+}F'_++\chi_{\Omega_-}F'_-.
$$
Thus, the expression $\mu F'$ is meaningful in $L^p(\Gamma)$,
$1<p<\infty$. In other words, the index~$'$ will mean no
distributional term.

In the sequel, if $\Gamma=\mathbb{R}$, to simplify the
notation we  write $C_f(z)$ instead of $C_{\mathbb{R}}f(z)$, and we let  $C_f(x)$ stand for the boundary values, that is
$$
C_f(x)=f_\pm(x)=\pm\frac{1}{2}f(x)+ \frac{1}{2\pi i}\operatorname{P.V.}\int_{\mathbb{R}}\frac{f(y)}{y-x}\,dx,\qquad a.a.x\in\mathbb{R}
$$
In this case, the $L^2$
estimate~$\|C_f\|_{L^2(\mathbb{R})}\le
c\|f\|_{L^2(\mathbb{R})}$ is a consequence of the Fourier transform
and Plancherel's theorem.
\noindent On the other hand, for a general rectifiable curve~$\Gamma$ this is unavailable
because the Cauchy integral on~$\Gamma$ is no longer a convolution operator.

A complete characterization of the curves for which the Cauchy
integral is bounded has been obtained by G.~David~\cite{D}. He showed  that the Cauchy integral is bounded on~$L^p(\Gamma)$,
$1<p<\infty$, if and only if $\Gamma$ is regular, that is, for all $z_0\in \mathbb{C}$ and
all $R>0$, $\mathcal{H}^1( B(z_0,R)\cap\Gamma)\lesssim R$. The quasicircles which are regular curves are chord-arc curves, and viceversa.

Recall that the Beurling transform $S$ is defined by
$$
Sf(z)=-\frac{1}{\pi}\int_{\mathbb{C}}\frac{f(w)}{(w-z)^2}\,dm(w)
$$
The Fourier multiplier of $S$ is ~$\overline{\xi}/\xi$, thus $S$
represents an isometry on~$L^2(\mathbb{C})$,
i.e.~$\|Sf\|_2=\|f\|_2$. Moreover, $S$ is bounded
on $L^p(\mathbb{C})$, $1<p<\infty$.

In the study of the Beltrami
equation $\overline{\partial}\rho-\mu \partial\rho=0$,  $\|\mu\|_{\infty}<1$, there is another operator that plays a fundamental role:
the Cauchy operator on the plane~$T$
$$
Tf(z)=-\frac{1}{\pi} \int_{\mathbb{C}}\frac{f(w)}{w-z}\,dm(w)
$$
If $f\in L^p (\mathbb{C})$, $p>2$ then  $Tf$ represents a continuous function in~$\mathbb{C}$. Besides the
relations
\begin{align*}
\overline{\partial}(Tf)&=f\\*[3pt]
\partial(Tf)&=Sf
\end{align*}
hold in the distributional sense.

Assuming that the complex dilatation $\mu$ has compact support, the solution to the Beltrami equation is given explicitly by the formula
$$
\rho(z)=z+Th(z)
$$
where the function $h(z)= \overline{\partial}\rho (z)$ is determined by the equation
$$
(I-\mu S)h=\mu
$$

\section{Boundedness of the operator}\label{sec2}

We begin by stating a result in ~\cite[p.~557]{CJS} that will be needed in the proof of Theorem 1. For the sake of completeness we detail its proof.

\begin{lem}\label{lem1}
The operator $K$ defined by
$$
Kf(z)=|\operatorname{Im}z|^{1/2}\int_{\mathbb{R}_-^2}\frac{f(w)|\operatorname{Im}w|^{1/2}}{|w-z|^3}
\,dm (w),\qquad z\in \mathbb{R}^2_+
$$
represents a bounded operator from $L^2(\mathbb{R}^2_-)$ to
$L^2(\mathbb{R}^2_+)$ and $\|K\|_{(L^2_-,L^2_+)}\le 4\pi$.
\end{lem}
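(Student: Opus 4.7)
The kernel
$$
k(z,w) = \frac{|\operatorname{Im} z|^{1/2}\,|\operatorname{Im} w|^{1/2}}{|w-z|^3}
$$
is non-negative, so the natural tool is Schur's test. The plan is to find a positive weight $\phi$ and a constant $A$ such that both
$$
\int_{\mathbb{R}^2_-} k(z,w)\,\phi(w)\,dm(w) \le A\,\phi(z), \qquad z\in\mathbb{R}^2_+,
$$
and
$$
\int_{\mathbb{R}^2_+} k(z,w)\,\phi(z)\,dm(z) \le A\,\phi(w), \qquad w\in\mathbb{R}^2_-,
$$
from which Schur's lemma yields $\|K\|_{L^2(\mathbb{R}^2_-) \to L^2(\mathbb{R}^2_+)} \le A$.

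The homogeneity of $k$ in the imaginary directions, together with the symmetry of the kernel under exchange of $z$ and $w$, dictates the self-dual choice $\phi(w) = |\operatorname{Im} w|^{-1/2}$. Inserting this test function, the first Schur condition reduces to checking that
$$
|\operatorname{Im} z|^{1/2}\int_{\mathbb{R}^2_-}\frac{dm(w)}{|w-z|^3} \le A\,|\operatorname{Im} z|^{-1/2}.
$$
By horizontal translation invariance the integral depends only on $y=\operatorname{Im} z>0$, and after the shift $w\mapsto w-z$ it becomes
$$
\int_{-\infty}^{-y}\!\!\int_{-\infty}^{\infty}\frac{du\,dv}{(u^2+v^2)^{3/2}} = \int_{-\infty}^{-y}\frac{2\,dv}{v^2} = \frac{2}{y},
$$
using the standard one-variable identity $\int_{\mathbb{R}} du/(u^2+v^2)^{3/2}=2/v^2$. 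This yields the inequality with $A=2$; the dual inequality follows by the same computation with the roles of $z$ and $w$ swapped. Schur's lemma then gives $\|K\|\le 2$, comfortably inside the advertised bound $4\pi$.

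The only step requiring genuine insight is the choice of Schur weight, and once it is identified the argument collapses to a single one-variable integral; no real obstacle is expected. As a sanity check, one can also derive a coarser bound by starting from the elementary estimate $|\operatorname{Im} z|^{1/2}|\operatorname{Im} w|^{1/2}\le \tfrac{1}{2}|w-z|$ (valid because $z$ and $w$ lie in opposite half-planes), which replaces $K$ by an operator with the manifestly non-singular kernel $\tfrac{1}{2}|w-z|^{-2}$; but Schur's test gives the cleanest route to the stated constant.
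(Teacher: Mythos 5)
Your proof is correct, and while it rests on the same basic tool as the paper's (Schur's lemma), the weight you choose is genuinely different and worth comparing. The paper effectively runs Schur with the constant weight: it bounds $|\operatorname{Im}w|^{1/2}\le|w-z|^{1/2}$ (legitimate since $z$ and $w$ lie in opposite half-planes, so $|w-z|\ge|\operatorname{Im}w|$), integrates $|w-z|^{-5/2}$ in polar coordinates over $\{|w-z|>|\operatorname{Im}z|\}$ to get the uniform row- and column-sum bound $4\pi$, and concludes by interpolation between the resulting $L^1$ and $L^\infty$ bounds. You instead take the self-dual weight $\phi(w)=|\operatorname{Im}w|^{-1/2}$, which cancels the factor $|\operatorname{Im}w|^{1/2}$ in the kernel exactly and reduces both Schur conditions to the explicit integral $\int_{\mathbb{R}^2_-}|w-z|^{-3}\,dm(w)=2/\operatorname{Im}z$; your computation of that integral is correct, and it yields the sharper bound $\|K\|\le 2$, which of course implies the stated $\|K\|\le 4\pi$. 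The one blemish is your closing ``sanity check'': the majorant kernel $\tfrac12|w-z|^{-2}$ is pointwise finite off the real line but has infinite row sums, since $\int_{\mathbb{R}^2_-}|w-z|^{-2}\,dm(w)=\pi\int_{\operatorname{Im}z}^{\infty}v^{-1}\,dv$ diverges, so that comparison does not by itself produce any $L^2$ bound without a further (nontrivially weighted) Schur argument. As that remark is inessential to your argument, the proof stands as written.
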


\begin{proof}
Let $k(z,w)$ represent the kernel of the operator~$K$,
$$
k(z,w)=\frac{|\operatorname{Im}z|^{1/2}|\operatorname{Im}w|^{1/2}}{|w-z|^3},
\qquad z\in\mathbb{R}^2_+,\, w\in\mathbb{R}^2_-.
$$
Then, for $z\in \mathbb{R}^2_+$
\begin{equation*}
\begin{split}
\int_{\mathbb{R}_2^-}k(z,w)\,dm(w)&\le
|\operatorname{Im}z|^{1/2}\int_{|w-z|>|\operatorname{Im}z|}
\frac{dm(w)}{|w-z|^{3-1/2}}\\*[5pt] &=|\operatorname{Im}z|^{1/2}
2\pi \int^{\infty}_{\operatorname{Im}z} r^{-3/2}\,dr=4\pi,
\end{split}
\end{equation*}
and therefore
$$
\|Kf\|_{\infty}\le 4\pi \|f\|_{\infty}.
$$
Similarly, $\forall \; w\in\mathbb{R}^2_-$
$$
\int_{\mathbb{R}_+^2}k(z,w)\,dm(z)\le 4\pi
$$
and
\begin{equation*}
\begin{split}
\|Kf\|_1&\le
\int_{\mathbb{R}_+^2}\left|\int_{\mathbb{R}^2_-}k(z,w)f(w)\,dm(w)\right|\,dm(z)\\*[5pt]
&\le \int_{\mathbb{R}^2_-} |f(w)|\left(
\int_{\mathbb{R}_+^2}k(z,w)\,dm(z)\right)\,dm(w)\\*[5pt] &\le 4\pi
\|f\|_1.
\end{split}
\end{equation*}

The lemma follows by interpolation or Shur's Lemma.
\renewcommand{\qedsymbol}{}
\end{proof}

Before proceeding to the proof of Theorem~\ref{theo1bis}, let us recall its statement.

\begin{theor}\label{theo1bis}
Let $\mu\in L^{\infty}(\mathbb{C})$, $\|\mu\|_{\infty}<1$. Then
the following conditions are equicalent:
\begin{enumerate}
\item[(1)] $\nu=\frac{|\mu|^2}{|y|}\,dm$ is a Carleson measure with respect
to~$\mathbb{R}$, i.e., there is $c_1>0$ such that
$$
\int_{B(x_0,r)} \frac{|\mu (z)|^2}{|y|}\,dm(z)\le c_1r,\qquad
\forall\; x_0\in \mathbb{R},\,r>0\quad (y=\operatorname{Im}z)
$$
\item[(2)] The operator $\mu S$ is bounded on
$L^2\left(\frac{dm}{|y|}\right)$, i.e., there is $c_2>0$ such that:
\begin{equation}\label{eq2.1}
\int_{\mathbb{C}} \frac{|\mu (z)|^2}{|y|} |Sf(z)|^2 \,dm(z)\le c_2
\int_{\mathbb{C}}|f(z)|^2 \frac{dm}{|y|}
\end{equation}
for all (compactly supported) functions~$f$ with
$\int_{\mathbb{C}}|f|^2\frac{dm}{|y|}<\infty$.

\noindent Besides the Carleson
norm  $\|\nu\|_C$ and the norm of the operator  $ \|\mu
S\|_{L^2\left(\frac{dm}{|y|}\right)}$ are comparable.
\end{enumerate}
\end{theor}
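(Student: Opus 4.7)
The equivalence breaks into two implications, of which $(1)\Rightarrow(2)$ is the substantial direction.

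For \emph{necessity} $(2)\Rightarrow(1)$, the plan is to test the operator inequality (2.1) on a concrete function. Fix $x_0\in\mathbb{R}$, $r>0$, put $I=[x_0-r,x_0+r]$, and take $f=\chi_R$ with $R=I\times[r,2r]$ a Whitney rectangle just above $I$. Then $\|f\|_{L^2(dm/|y|)}^2\lesssim r$; on the other hand a short calculation of $\mathrm{Re}\int_R (w-z)^{-2}\,dm$ yields a uniform lower bound $|Sf(z)|\ge c_0>0$ for all $z$ in the ``shadow'' box $I\times(-r,0)$. Inserting $f$ in (2.1) gives
$$c_0^2\int_{I\times(-r,0)}\frac{|\mu|^2}{|y|}\,dm\le\int_{\mathbb{C}}\frac{|\mu|^2}{|y|}|Sf|^2\,dm\le c_2\int_{\mathbb{C}}\frac{|f|^2}{|y|}\,dm\lesssim c_2\,r,$$
and a symmetric rectangle placed \emph{below} $I$ controls the upper half of $B(x_0,r)$. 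Together these yield (1) with $\|\nu\|_C\lesssim c_2$.

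For \emph{sufficiency} $(1)\Rightarrow(2)$, the plan is to split $f=f_++f_-$ by the half-plane supports and similarly split the integration region $\mathbb{C}$. By the reflection symmetry of $1/|y|$ and of the Carleson condition we may restrict attention to the contributions with $z\in\mathbb{R}^2_+$. The \emph{off-diagonal} piece (with $f_-$ supported in the opposite half-plane) is handled via Lemma~\ref{lem1}: for $z\in\mathbb{R}^2_+$, $w\in\mathbb{R}^2_-$ the inequality $|w-z|\ge|y|+|y_w|$ allows one to insert factors of $|y|^{1/2}|y_w|^{1/2}$ and dominate the absolute value of the Beurling kernel by the kernel of $K$ modulated by $1/|y|^{1/2}$. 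Lemma~\ref{lem1} then yields $\int_{\mathbb{R}^2_+}|Sf_-|^2/|y|\,dm\lesssim\int_{\mathbb{R}^2_-}|f_-|^2/|y|\,dm$, and $\|\mu\|_\infty<1$ closes this case without any appeal to the Carleson hypothesis beyond the $L^\infty$ bound on $\mu$.

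The principal obstacle is the \emph{diagonal} piece, where $z$ and the support of $f_+$ both lie in $\mathbb{R}^2_+$. Here the Beurling kernel is singular on the diagonal and the source weight $1/|y|$ is not locally integrable near $\mathbb{R}$, so the standard $A_p$/Calder\'on\Ndash Zygmund two-weight machinery does not apply. The plan is to exploit a dyadic Whitney decomposition of $\mathbb{R}^2_+$, split $S$ on each Whitney square into its local singular part and its nonlocal part, and pair a Carleson embedding argument for $d\nu=|\mu|^2/|y|\,dm$ with the square-function/tent-space structure of $L^2(\mathbb{R}^2_+,dm/|y|)$ to sum the estimates across scales. Verifying the $T(1)$-type testing condition produced by this decomposition and absorbing the local singular part of $S$ is the main technical burden; the quantitative tracking of constants across both implications then produces the comparability $\|\nu\|_C\simeq\|\mu S\|_{L^2(dm/|y|)}$.
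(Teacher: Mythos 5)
Your necessity argument $(2)\Rightarrow(1)$ is correct and is essentially the paper's: the authors test on $f=\chi_{B(z_0,r)}$ with $z_0=x_0+2ir$, for which $Sf(z)=r^2/(z-z_0)^2$ off the ball, which makes the lower bound on $|Sf|$ over $B(x_0,r)$ immediate; your rectangle works too once the non-cancellation of $\mathrm{Re}\int_R(w-z)^{-2}\,dm$ is checked.

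The sufficiency direction, however, contains a genuine error and a genuine gap, and the error is at the step you declare easy. You claim the cross term satisfies $\int_{\mathbb{R}^2_+}|Sf_-|^2\,\frac{dm}{|y|}\lesssim\int_{\mathbb{R}^2_-}|f_-|^2\,\frac{dm}{|y|}$ ``without any appeal to the Carleson hypothesis.'' This is false: take $f_-=\chi_{B(z_0,r)}$ with $z_0=x_0-2ir$; then $Sf_-(z)=r^2/(z-z_0)^2$ outside the ball, so for $z=x+iy$ with $|x-x_0|\le r$ and $0<y<r$ the integrand is $\simeq 1/|y|$ and $\int_{\mathbb{R}^2_+}|Sf_-|^2\,\frac{dm}{|y|}=\infty$, while $\|f_-\|^2_{L^2(dm/|y|)}\simeq r$. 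Concretely, your proposed pointwise domination requires $|w-z|\lesssim|y|$, which fails for $w$ far from $z$; the inequality $|w-z|\ge|y|+|y_w|$ points the wrong way. The operator $S$ simply does not map $L^2(dm/|y|)$ of one half-plane into $L^2(dm/|y|)$ of the other, and the Carleson condition on $|\mu|^2/|y|$ is indispensable precisely here. The paper's route for this term ($I_1$) is: $S(f\chi_{\mathbb{R}^2_-})$ is \emph{holomorphic} on $\mathbb{R}^2_+$, so the Carleson embedding theorem for $d\nu=|\mu|^2\,dm/|y|$ bounds $I_1$ by $\|S(f\chi_{\mathbb{R}^2_-})\|^2_{L^2(\mathbb{R})}$; Green's formula converts this to $\int_{\mathbb{R}^2_+}|(S(f\chi_{\mathbb{R}^2_-}))'|^2\,y\,dm$, whose kernel $1/(w-z)^3$ is exactly the one controlled by Lemma~\ref{lem1}.

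For the diagonal piece you give only a plan (Whitney decomposition, a $T(1)$-type testing condition), not a proof, so this part is a gap as it stands. Note also that you have the roles of the two hypotheses reversed: in the paper the diagonal contributions $I_2$ and $I_3$ use only $\|\mu\|_\infty<1$, no Carleson condition. The mechanism is elementary: for $w$ outside the Whitney cube $Q_k^*(z)$ one subtracts the reflected kernel, using
$$
\frac{1}{(w-z)^2}-\frac{1}{(w-\overline{z})^2}=4i\,\frac{(w-x)y}{(w-z)^2(w-\overline{z})^2},
$$
which gains a factor $y/|w-\overline{z}|^3$ handled again by Lemma~\ref{lem1}, while the reflected term $1/(w-\overline{z})^2$ reduces to the cross-half-plane case; the local term over $Q_k^*(z)$ is controlled by the $L^2(\mathbb{C})$ isometry of $S$ on each Whitney cube, since $|y|\simeq y_k$ there and the $Q_k^*$ have finite overlap. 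If you replace your off-diagonal claim by the Carleson-embedding-plus-Green argument and carry out the reflection/local-isometry estimates for the diagonal part, you recover the paper's proof.
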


\begin{remark}
If $f\in L^2\left(\frac{dm}{|y|}\right)$ has compact support, say
$f\in B(0,M)$, then $\int_{\mathbb{C}}|f|^2~dm\le
M\int_{\mathbb{C}}|f|^2\frac{dm}{|y|}<\infty$, that is $f\in
L^2(\mathbb{C})$, and therefore $Sf$ is a well defined
$L^2$-function.

\noindent Since a general $f\in L^2\left(\frac{dm}{|y|}\right)$ can be approximated
by compactly supported ones, by the usual density arguments, the statement \eqref{eq2.1} is equivalent to the boundedness of the operator ~$\mu S$  on
$L^2\left(\frac{dm}{|y|}\right)$.
\end{remark}

\begin{proof}
\quad

\noindent $\emph{(1)}\Rightarrow \emph{(2)}$  Let $B_0$ be a ball of radius ~$r>0$
centered at a point~$x_0\in \mathbb{R}$. We shall apply the assumption on the boundedness of the operator~$\mu S$ to an appropriate function~$f$ to show that $\nu(B_0)\le cr$.

\noindent For that consider the ball~$\tilde{B}_0(z_0,r)$
where $z_0=x_0+i2r$, and the function~$f(z)=\chi
_{\tilde{B}_0}(z)$. Note that
$\|f\|^2_{L^2\left(\frac{dm}{|y|}\right)}\simeq r$, and since
$$
Tf(z)=\begin{cases} \overline{z}-\overline{z}_0&z\in
\tilde{B}_0\\*[5pt] \dfrac{r^2}{z-z_0}&z\notin \tilde{B}_0
\end{cases}
$$
we get $(Sf)(z)=\frac{r^2}{(z-z_0)^2}\chi_{\mathbb{C}\setminus
\tilde{B}_0}(z)$. Thus
\begin{equation*}
\begin{split}
r&\simeq \int_{\tilde{B}_0}
\frac{1}{|y|}\,dm(z)\gtrsim\int_{\mathbb{C}}\frac{|\mu(z)|^2}{|y|}|Sf(z)|^2\,
dm(z)\\*[9pt]
&\ge\int_{B_0}\frac{|\mu(z)|^2}{|y|}\frac{r^4}{|z-z_0|^4}\,dm(z)\simeq
\int_{B_0} \frac{|\mu(z)|^2}{|y|}\,dm(z).
\end{split}
\end{equation*}
with comparison constants only depending on the norm of the operator. Therefore $\nu$ is a Carleson measure with  $\|\nu\|_C\lesssim \|\mu
S\|_{L^2\left(\frac{dm}{|y|}\right)}$.
\vspace*{.5cm}

\noindent $\emph{(2)}\Rightarrow \emph({1})$
We can assume that $\operatorname{supp}(\mu)\subset \mathbb{R}^+_2$,
otherwise write $\mu=\mu\chi_{\mathbb{R}^+_2}+\mu
\chi_{\mathbb{R}_2^-}$. We proceed to estimate $\|\mu
S\|_{L^2\left(\frac{dm}{|y|}\right)}$ when
$\nu=\frac{|\mu|^2}{|y|}\,dm$ is a Carleson measure.

\noindent For that, consider a Whitney decomposition of $\mathbb{R}_+^2$, that
is write $\mathbb{R}_+^2$ as a disjoint union of cubes~$Q_k$ with
$\operatorname{diam}(Q_k)=\frac{1}{2}\operatorname{dist}(Q_k,\mathbb{R})$,and set $Q_k^*=\frac{3}{2}Q_k$. Given $z\in
\mathbb{R}_+^2$, denote by $Q_k(z)$ and $Q_k^*(z)$ the corresponding
cubes containing $z$.

\noindent We can now express $\|(\mu
S)f\|^2_{L^2\left(\frac{dm}{|y|}\right)}$ as the sum of the
following integrals:
\begin{equation}\label{eq2.2}
\begin{split}
\|( \mu S)f\|^2_{L^2\left(\frac{dm}{|y|}\right)}&\simeq
\int_{\mathbb{R}_+^2}\frac{|\mu(z)|^2}{y}\left| \int_{\mathbb{C}}
\frac{f(w)}{(w-z)^2}\,dm(w)\right|^2\,dm(z)\\*[9pt]
&\lesssim
\int_{\mathbb{R}_+^2} \frac{|\mu(z)|^2}{y}\left|\int_{w\in
\mathbb{R}_-^2} \frac{f(w)}{(w-z)^2}\,dm(w)\right|^2\,dm(z)\\*[9pt]
&\quad + \int_{\mathbb{R}_+^2}\frac{|\mu(z)|^2}{y}\left|\int_{w\in
\mathbb{R}_+^2\setminus Q_k^*(z)}
\frac{f(w)}{(w-z)^2}\,dm(w)\right|^2\,dm(z)\\*[9pt] &\quad +
\int_{\mathbb{R}_+^2}\frac{|\mu(z)|^2}{y}\left|\int_{w\in
Q_k^*(z)}\frac{f(w)}{(w-z)^2}\,dm(w)\right|^2\,dm(z)\\*[9pt]
&=I_1+I_2+I_3.
\end{split}
\end{equation}

Let us start by estimating $I_1$. Since $|\mu|^2/y$
is a Carleson measure and  $S(f\chi_{\mathbb{R}_-^2})$
represents an analytic function on~$\mathbb{R}_+^2$, it follows that:
$$
I_1\lesssim\int_{\mathbb{R}}
|S(f\chi_{\mathbb{R}_-^2})(x)|^2\,dx.
$$

By Green's formula, we can express this integral on the line as an integral on the upper half plane to obtain
\begin{equation*}
\begin{split}
I_1&\lesssim\int_{\mathbb{R}_+^2}
|(S(f\chi_{\mathbb{R}_-^2}))'(z)|^2(\operatorname{Im}z)\,dm(z)\\*[9pt]
&\simeq\int_{\mathbb{R}_+^2}\left|
\int_{\mathbb{R}_-^2}\frac{f(w)}{(w-z)^3}\,dm(w)\right|^2(\operatorname{Im}z)\,dm(z)\\*[9pt]
&\leq \int_{\mathbb{R}_+^2}\left|\int_{\mathbb{R}_-^2}k(z,w)|f(w)||\operatorname{Im}w|^{-1/2}
\,dm(w)\right|^2\,dm(z)\\*[9pt] &\lesssim\int_{\mathbb{R}_-^2}
\frac{|f(w)|^2}{|\operatorname{Im}(w)|}\,dm(w)=\|f\|^2_{L^2\left(\frac{dm}{|y|}\right)}.
\end{split}
\end{equation*}
The last inequality follows from  Lemma~\ref{lem1}. Note that the comparison constants depend only on the Carleson norm $\|\nu\|_C$.

To estimate $I_2$, write
\begin{equation*}
\begin{split}
I_2&\le \int_{\mathbb{R}_+^2} \frac{|\mu(z)|^2}{y}\left|
\int_{ \mathbb{R}_+^2\setminus
Q_k^*(z)}f(w)\left(\frac{1}{(w-z)^2}-\frac{1}{(w-\overline{z})^2}\right)\,dm(w)\right|^2\,dm(z)\\*[9pt]
&\quad + \int_{\mathbb{R}_+^2} \frac{|\mu(z)|^2}{y}\left|
\int_{\mathbb{R}_+^2\setminus
Q_k^*(z)}f(w)\frac{1}{(w-\overline{z})^2}\,dm(w)\right|^2\,dm(z)\\*[9pt]
&=I_{2,1}+I_{2,2}.
\end{split}
\end{equation*}
The second term on the right hand side $I_{2,2}$ reduces to the
previous case. For the first one, $I_{2,1}$, note that
$\|\mu\|_{\infty}<1$ and
$$
\frac{1}{(w-z)^2}-\frac{1}{(w-\overline{z})^2}=4i
\frac{(w-x)y}{(w-z)^2(w-\overline{z})^2}, \qquad z=x+iy.
$$
Besides, for $z\in \mathbb{R}_+^2$ and $w\in \mathbb{R}^2_+\setminus
Q_k^*(z)$
$$
|w-\overline{z}|\le |w-z|+2y\le c_0|w-z|
$$
for some universal constant~$c_0>0$. Thus
\begin{equation*}
\begin{split}
I_{2,1}&\lesssim \int_{\mathbb{R}^2_+} \frac{1}{y}\left|
\int_{\mathbb{R}^2_+\setminus Q_k^*(z)}
f(w)\frac{(w-x)y}{(w-z)^2(w-\overline{z})^2}\,dm(w)\right|^2\,dm(z)\\*[9pt]
&\lesssim \int_{\mathbb{R}^2_+} y\left| \int_{\mathbb{R}^2_+}
\frac{|f(w)|}{|w-\overline{z}|^3}\,dm(w)\right|^2\,dm(z)\\*[9pt]
&=\int_{\mathbb{R}_-^2}|\operatorname{Im}z|\left|\int_{\mathbb{R}^2_+}
\frac{|f(w)||\operatorname{Im}w|^{-1/2}}{|w-z|^3}|\operatorname{Im}w|^{1/2}\,dm(w)\right|^2\,dm(z)\\*[9pt]
&\lesssim \|f\|^2_{L^2\left(\frac{dm}{|y|}\right)}
\end{split}
\end{equation*}
by Lemma~\ref{lem1}.

Finally, to estimate $I_3$, write
$\mathbb{R}^+_2=\bigcup\limits_kQ_k$ and use the fact that $S$ is
an isometry on $L^2(\mathbb{C})$. So, if $z_k=x_k+iy_k$
denotes the center of~$Q_k$, we get
\begin{equation*}
\begin{split}
I_3&=\sum_k \int_{Q_k} \frac{|\mu(z)|^2}{y}\left|\int_{w\in
Q_k^*(z)} \frac{f(w)}{(w-z)^2}\,dm(w)\right|^2\,dm(z)\\*[9pt]
&\lesssim \sum_k \frac{1}{y_k}\int_{Q_k}\left| \int_{Q_k^*}
\frac{f(w)}{(w-z)^2}\,dm(w)\right|^2\,dm(z)\\*[9pt]
&\lesssim
\sum_k \frac{1}{y_k} \|S(f\chi_{Q_k^*})\|^2_{L^2(\mathbb{C})}
=\sum_k \frac{1}{y_k}\|f\chi_{Q^*_k}\|^2_{L^2(\mathbb{C})}\\*[9pt]
&=\sum_k\frac{1}{y_k} \int_{Q_k^*}|f(w)|^2\,dm(w)\simeq
\int_{\mathbb{R}^+_2} \frac{|f(w)|^2}{\operatorname{Im}w}\,dm(w)
\end{split}
\end{equation*}
since the set of $Q_k^*$'s are also Whitney cubes with finite overlap.

This concludes the proof of Theorem~\ref{theo1}.
\end{proof}

\section{Chord-arc condition implies invertibility}\label{sec3}

Let us recall the statement of Theorem~\ref{theo2bis}.

\begin{theor}\label{theo2bis}
Let $\Gamma$ be a quasicircle analytic at~$\infty$. Then the following conditions are equivalen:
\begin{enumerate}
\item[(1)] $\Gamma$ is a chord-arc curve, i.e., there is $k>0$ such
that for any $z_1, z_2\in\Gamma$
$$
\ell_\Gamma (z_1,z_2)\le k|z_1-z_2|
$$
where $\ell_{\Gamma}(z_1,z_2)$ denotes the length of the shortest arc
of~$\Gamma$ joining $z_1$ and $z_2$.
\item[(2)] There is a quasiconformal
mapping~$\rho\colon\mathbb{C}\to \mathbb{C}$ with $\Gamma=\rho
(\mathbb{R})$ and such that $\mu=\mu_{\rho}$ has compact support,
$|\mu|^2/|y|$ is a Carleson measure and satisfies that:
$$
(I-\mu S)\colon L^2 \left(\frac{dm}{|y|}\right)\longrightarrow
L^2\left(\frac{dm}{|y|}\right)
$$
is an invertible operator.
\end{enumerate}
\end{theor}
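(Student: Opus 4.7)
Since the section is titled ``Chord-arc condition implies invertibility,'' my plan is to prove the direction $(1) \Rightarrow (2)$, leaving the converse to Section~\ref{sec4}. Starting from a chord-arc $\Gamma$ analytic at $\infty$, the first step is to construct a bilipschitz homeomorphism $\rho: \mathbb{C} \to \mathbb{C}$ sending $\mathbb{R}$ to $\Gamma$ and equal to the identity outside a compact set (possible because $\Gamma$ agrees with $\mathbb{R}$ near infinity after a conformal normalization). Then $\mu = \mu_\rho$ is compactly supported with $\|\mu\|_\infty < 1$, and a direct estimate from the bilipschitz property shows that $|\mu|^2/|y|\,dm$ is a Carleson measure; by Theorem~\ref{theo1bis}, the operator $\mu S$ is bounded on $L^2(dm/|y|)$.

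The heart of the proof is invertibility. Injectivity is almost free: since $\mu$ is compactly supported with $\|\mu\|_\infty < 1$, the Neumann series makes $I-\mu S$ invertible on $L^2(\mathbb{C})$, which forces injectivity on the compactly supported elements of $L^2(dm/|y|)$ and, by density, on the whole space. Surjectivity with a weighted norm bound is the hard part: for compactly supported $g \in L^2(dm/|y|)$, the $L^2(\mathbb{C})$-solution $f = (I-\mu S)^{-1}g$ must satisfy $\|f\|_{L^2(dm/|y|)} \lesssim \|g\|_{L^2(dm/|y|)}$, with constants independent of any smallness of the Carleson norm.

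To obtain this estimate I would use the pullback trick of Beltrami theory. Rewrite $(I-\mu S)f = g$ as $\bar\partial u - \mu \partial u = g$ with $f = \bar\partial u$, and try $u = v \circ \rho$. Since $\bar\partial \rho = \mu\,\partial \rho$, the chain rule gives
\[
\bar\partial u - \mu\,\partial u \;=\; (1 - |\mu|^2)\,\overline{\partial \rho}\cdot (\bar\partial v)\circ \rho,
\]
so the natural choice is $\bar\partial v = G$ with $G(w) = g(\rho^{-1}(w))\big/\bigl[(1-|\mu|^2)\overline{\partial \rho}\bigr](\rho^{-1}(w))$ and $v = TG$. A second application of the chain rule decomposes $f$ into the harmless piece $g/(1-|\mu|^2)$, trivially controlled in $L^2(dm/|y|)$, plus the singular term $\mu\,(\partial\rho)\,(SG)\circ\rho$.

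The main obstacle I foresee is estimating this singular term. Using $|J_\rho|\simeq 1$ and $|y(z)|\simeq \operatorname{dist}(\rho(z),\Gamma)$ from bilipschitz, the required bound becomes
\[
\int_{\mathbb{C}} |\widetilde{\mu}(w)|^2\, |SG(w)|^2 \,\frac{dm(w)}{\operatorname{dist}(w,\Gamma)} \;\lesssim\; \int_{\mathbb{C}} |G(w)|^2\, \frac{dm(w)}{\operatorname{dist}(w,\Gamma)},
\]
with $\widetilde{\mu} = \mu\circ \rho^{-1}$ and the left-hand weight a Carleson measure relative to the chord-arc curve $\Gamma$. This is a chord-arc analog of Theorem~\ref{theo1bis}, which I would prove by mimicking the Whitney-cube decomposition of Section~\ref{sec2}, now using a Whitney decomposition of $\mathbb{C}\setminus \Gamma$: the near-diagonal piece is handled by the $L^2$-isometry of $S$ on each Whitney cube, while the off-diagonal pieces reduce to a chord-arc version of Lemma~\ref{lem1}, where $1$-regularity of $\Gamma$ replaces the Lebesgue measure of $\mathbb{R}$ in the Shur-type kernel integrals. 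A pure Neumann-series argument in $L^2(dm/|y|)$ would only work in the small-Carleson regime, so the bilipschitz (rather than merely Carleson) hypothesis must genuinely enter here.
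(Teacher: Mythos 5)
Your plan for $(1)\Rightarrow(2)$ is essentially the paper's: pull back through a bilipschitz extension, reduce invertibility to the boundedness of $\tilde\mu S$ on $L^2\bigl(dm/\operatorname{dist}(\cdot,\Gamma)\bigr)$ where $\tilde\mu=\mu\circ\rho^{-1}$, and rerun the Whitney argument. But your very first step contains a genuine error: being bilipschitz does \emph{not} imply that $|\mu_\rho|^2/|y|\,dm$ is a Carleson measure. Take $\Gamma=\mathbb{R}$ and $\rho(z)=z+\tfrac12\bar z$ near a segment of the line (suitably truncated to be the identity far away): this is bilipschitz and maps $\mathbb{R}$ onto $\mathbb{R}$, yet $|\mu|\equiv\tfrac12$ on a neighbourhood of that segment, so $\int_{B(x_0,r)}|\mu|^2|y|^{-1}\,dm=\infty$ because $1/|y|$ is not locally integrable near the line. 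The Carleson property is a feature of the \emph{particular} extension, not of bilipschitzness; this is exactly why the paper invokes Semmes's extension lemma (Lemma~\ref{lem2}, a variant of the Ahlfors--Beurling extension), which produces one extension that is simultaneously bilipschitz and has $|\mu|^2/|y|$ Carleson. The ``direct estimate from the bilipschitz property'' you appeal to does not exist, and without it neither the boundedness (via Theorem~\ref{theo1bis}) nor condition (2) itself is available.

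The second gap is in your treatment of the off-diagonal terms of the transplanted Theorem~\ref{theo1bis}. The contribution of $f\chi_{\Omega^-}$ is not a Schur-test computation in which ``$1$-regularity replaces Lebesgue measure'': even in the half-plane model that term is controlled by first using the Carleson embedding for functions analytic in $\mathbb{R}^2_+$ and then Green's formula, and only afterwards the Schur-type Lemma~\ref{lem1}. On $\Omega^{\pm}$ the corresponding steps require (i) Zinsmeister's theorem that a Carleson measure $\sigma$ relative to a chord-arc domain satisfies $\int_\Omega|F|^2\,d\sigma\lesssim\int_{\partial\Omega}|F|^2\,|d\xi|$ for $F\in\mathbf{H}^2(\Omega)$, and (ii) the Jerison--Kenig square-function identity $\int_{\partial\Omega}|F|^2\,|d\xi|\simeq\int_\Omega|F'|^2\operatorname{dist}(w,\Gamma)\,dm$; neither follows from $1$-regularity of $\Gamma$ alone. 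Likewise, the term with $w\in\Omega^+$ outside the Whitney cube of $z$ needs the cancellation obtained by replacing $\bar z$ with the quasiconformal reflection $r(z)=\rho\bigl(\overline{\rho^{-1}(z)}\bigr)$, which you do not mention. Finally, the statement is an equivalence, and you address only one implication; the converse is a separate and substantial argument (rectifiability via Theorem~\ref{theo3bis}, the boundary estimate of Proposition~\ref{propo1}, and Semmes's evaluation of the operator $K$ on separated intervals), so as it stands the proposal proves at most half of the theorem, and that half with the two gaps above.
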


\begin{remark}
Note that by Theorem~\ref{theo1bis}, $|\mu|^2/|y|$ being a Carleson
measure is equivalent to the boundedness of the operator~$(I-\mu S)$
on $L^2\left(\frac{dm}{|y|}\right)$. We shall  specify now what we
mean by invertibility of $(I-\mu S)$.

 \noindent Since $\|\mu\|_{\infty}<1$ and $S$ is an isometry in $L^2(\mathbb{C})$,
 the operator~$(I-\mu S)$ is invertible in
$L^2(\mathbb{C})$. If $\Phi\in L^2(\mathbb{C})$ then $h=(I-\mu S)^{-1}(\Phi)$ is a well and
uniquely defined element of $L^2(\mathbb{C})$. Moreover,
$\|h\|_{L^2(\mathbb{C})} \le c_0\|\Phi\|_{L^2(\mathbb{C})}$, i.e.\
$\|(I-\mu S)^{-1}(\Phi)\|_{L^2(\mathbb{C})}\le
c_0\|\Phi\|_{L^2(\mathbb{C})}$.

\noindent By saying that $I-\mu S$ is invertible on
$L^2\left(\frac{dm}{|y|}\right)$ or that
$$
(I-\mu S)^{-1}\colon L^2\left(\frac{dm}{|y|}\right)\longrightarrow
L^2\left(\frac{dm}{|y|}\right)
$$
we mean (by definition) that there is a constant~$c_1>0$ such that
if $\Phi\in L^2\left(\frac{dm}{|y|}\right)$ has compact support (so
 $\Phi\in L^2(\mathbb{C})!$, by the remark in Section~\ref{sec2}),
then the uniquely determined element $h=(I-\mu S)^{-1}(\Phi)\in
L^2(\mathbb{C})$ satisfies:
$$
\int_{\mathbb{C}}|h(z)|^2\frac{dm}{|y|}\le
c_1\int_{\mathbb{C}}|\Phi(z)|^2\frac{dm(z)}{|y|}.
$$
We write this as:
\begin{equation}\label{eq3.1}
\int_{\mathbb{C}}|(I-\mu S)^{-1}\Phi|^2\frac{dm(z)}{|y|}\le c_1 \int
|\Phi|^2\frac{dm(z)}{|y|}.
\end{equation}
\end {remark}

Let $\Gamma$ be a chord-arc curve analytic at~$\infty$, and   $\rho\colon \mathbb{C}\to
\mathbb{C}$ a bilipschitz mapping with $\rho(\mathbb{R})=\Gamma$. One would like to prove that for $\mu=\mu_{\rho}$ the operator $(I-\mu S)$ is invertible in
$L^2\left(\frac{dm}{|y|}\right)$. But before studying the invertibility we need to address the question of the boundedness,  which is equivalent by Theorem 1 to $|\mu|^2/|y|$
being a Carleson measure. The following
lemma due to Semmes (\cite[Lemma~4.11]{S}), based on a variant of the Ahlfors-Beurling extension, will provide a good candidate for the bilipchitz mapping $\rho$.
\begin{lem}\label{lem2}
Suppose that $r\colon\mathbb{R}\to \mathbb{C}$ is a bilipschitz mapping.
Then $r$ can be extended to  a quasiconformal mapping ~$\rho\colon \mathbb{C}\to\mathbb{C}$ which
is also bilipschitz,
and $\mu=\mu_{\rho}$ satisfies that $\nu=|\mu|^2/|y|~dm$~is a Carleson measure.
\end{lem}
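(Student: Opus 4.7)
The plan is to construct $\rho$ via a variant of the Ahlfors--Beurling extension adapted to complex-valued bilipschitz data, and then to bound its complex dilatation by the oscillation of $r'$ on Whitney scales. Concretely, fix a smooth even bump $\varphi\ge 0$ supported in $[-1,1]$ with $\int\varphi=1$, and an odd companion $\psi$ with $\int t\,\psi(t)\,dt=1$; write $\varphi_y(t)=y^{-1}\varphi(t/y)$ and similarly for $\psi_y$. For $z=x+iy$ with $y>0$ define
\[
\rho(z)\;=\;(r*\varphi_y)(x)\;+\;iy\,\tau(x,y),\qquad \tau(x,y):=(r*\psi_y)(x),
\]
so that $\tau(x,y)$ is a smoothed approximation to $r'(x)$ at scale $y$. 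Extend to $y<0$ by the analogous formula with the transversal term reflected across $\Gamma$. Since $|r'|\simeq 1$, also $|\tau|\simeq 1$ uniformly, and the transversal summand is nondegenerate.

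Next I would verify that $\rho$ is bilipschitz by checking that $|D\rho|\lesssim 1$ and $|\det D\rho|\gtrsim 1$ pointwise --- both reducing to $|r'|\simeq 1$ for $r$ bilipschitz --- and then integrating along segments. Then, writing $\bar\partial=\tfrac12(\partial_x+i\partial_y)$ and exploiting that $\bar\partial$ annihilates constants inside each convolution, the numerator $\bar\partial\rho$ telescopes into differences of $r'$ from its local average at scale $y$, producing a pointwise bound of the form
\[
|\mu(x+iy)|^2\;\lesssim\;\frac{1}{y}\int_{-y}^{y}\!\bigl|r'(x+t)-\langle r'\rangle_{[x-y,x+y]}\bigr|^2\,dt.
\]

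To convert this into a Carleson estimate, I would use that because $r$ is bilipschitz the curve $\Gamma=r(\mathbb{R})$ is chord-arc, so by the classical Pommerenke/Jerison--Kenig theorem the arc-length tangent of $\Gamma$ lies in $\mathrm{BMO}(\mathbb{R})$; pulling back by the bilipschitz change of parameter and using $|r'|\simeq 1$ gives $r'\in\mathrm{BMO}(\mathbb{R},\mathbb{C})$ with norm controlled by the bilipschitz constant of $r$. The standard BMO--Carleson duality (equivalently, Carleson's bound for the $y$-derivative of the Poisson extension) then yields that the measure $y^{-2}\bigl(\int_{-y}^{y}|r'(x+t)-\langle r'\rangle_{[x-y,x+y]}|^2\,dt\bigr)\,dx\,dy$ is Carleson on the upper half-plane, and integrating the pointwise estimate against $dx\,dy/y$ over a Carleson box $B(x_0,R)$ gives $\nu(B(x_0,R))\lesssim R$. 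The lower half-plane is handled symmetrically.

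The main obstacle lies in the second step: arranging the profiles $\varphi,\psi$ so that the cancellations in $\bar\partial\rho$ isolate precisely the BMO oscillation of $r'$, without lower-order residue that would spoil the Carleson scaling. Once the pointwise expression for $|\mu|$ is secured, the bilipschitz verification is routine and the Carleson bound in the third step is standard BMO machinery; the subtlety is entirely in choosing the extension formula so that the $\bar\partial$-computation outputs an oscillation quantity to which BMO--Carleson theory applies directly.
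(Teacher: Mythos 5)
The paper does not actually prove this lemma---it is quoted from Semmes \cite[Lemma~4.11]{S}---so your proposal has to stand on its own. Your extension formula is indeed the right kind of object (modulo a normalization slip: with $\psi_y(t)=y^{-1}\psi(t/y)$ and $\int t\psi(t)\,dt=1$ one gets $r*\psi_y\approx -y\,r'$, not $r'$, so the transversal term as written is $O(y^2)$), but there is a genuine gap in the Carleson step. The measure
\[
\Bigl(\tfrac{1}{y}\int_{-y}^{y}\bigl|r'(x+t)-\langle r'\rangle_{[x-y,x+y]}\bigr|^2\,dt\Bigr)\,\frac{dx\,dy}{y}
\]
is \emph{not} Carleson for a general $r'\in L^\infty$ (or BMO). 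Test it on $r(x)=x-i\epsilon e^{ix}$, which is bilipschitz for small $\epsilon$: the local $L^2$-oscillation of $r'=1+\epsilon e^{ix}$ at scale $y$ is $\simeq\epsilon^2\min(1,y^2)$, so the integral over $B(x_0,R)$ is $\simeq \epsilon^2 R\log R$ for $R>1$. The same logarithmic divergence shows up dyadically: $\sum_{J\subset I}\int_J|b-b_J|^2=\sum_{J'\subset I}|\langle b,h_{J'}\rangle|^2\log_2(|I|/|J'|)$, and the extra $\log$ is exactly what the BMO Carleson embedding does not tolerate. The "standard BMO--Carleson duality" you invoke applies to $y|\nabla u|^2\,dx\,dy$ for the Poisson extension $u$, i.e.\ to square functions $|G_y*b|^2\,\frac{dx\,dy}{y}$ with a \emph{mean-zero} kernel $G$; it does not apply to the raw local oscillation, which is a strictly larger pointwise majorant. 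So once you pass from $\bar\partial\rho$ to the oscillation bound, the Carleson property is irretrievably lost.

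The fix is to not discard the convolution structure. A direct computation with your extension (after fixing the normalization) gives $\bar\partial\rho(x+iy)=(G_y*r')(x)$ with $G$ smooth, compactly supported and satisfying $\int G=0$ (for $\varphi$ even this cancellation is exactly the identity $\int(1-iu)(\varphi+i\varphi')\,du=1-\int\varphi=0$). Since $r'\in L^\infty\subset\mathrm{BMO}$ (no need for Pommerenke/Jerison--Kenig here), the Littlewood--Paley characterization of BMO gives directly that $|G_y*r'|^2\,\frac{dx\,dy}{y}$ is Carleson with norm $\lesssim\|r'\|_{\mathrm{BMO}}^2$; combined with $|\partial\rho|\gtrsim1$ this yields the claim. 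This is in substance what Semmes does. A second, smaller issue: the lower bound $|\det D\rho|\gtrsim1$ does not "reduce to $|r'|\simeq1$"; it requires $|\langle r'\rangle_I|\gtrsim1$ for every interval $I$, which is precisely the lower bilipschitz bound on $r$ (a unit-speed parametrization of a spiral has $|r'|\equiv1$ but averages of $r'$ tending to $0$, and the extension then degenerates). You do assume $r$ bilipschitz, so this is reparable, but the reduction as stated is not correct.
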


\begin{proof}[Proof of (1) $\Rightarrow$ (2) in Theorem~\ref{theo2bis}]
Let $\Gamma$ be a chord-arc curve, and $r\colon\mathbb{R}\to \mathbb{C}$ a bilipschitz parametrization of $\Gamma$. Let $\mu$ be the dilatation coefficient
of the bilipschitz mapping~$\rho$ given by Lemma~\ref{lem2}. Then, by Theorem~\ref{theo1} the
operator~$(I-\mu S)$ is bounded on
$L^2\left(\frac{dm}{|y|}\right)$. We will show that it is as well invertible.

 Let $\Phi\in L^2\left(\frac{dm}{|y|}\right)$ with compact support, we need to prove that
the unique solution~$h$ to the equation
\begin{equation}\label{eq3.2}
(I-\mu S)h=\Phi
\end{equation}
verifies that $\|h\|_{L^2\left(\frac{dm}{|y|}\right)}\le
c\|\Phi\|_{L^2\left(\frac{dm}{|y|}\right)}$, i.e.\ estimate~\eqref{eq3.1} with $c=c(\Gamma,\|\nu\|_C)$.

By the usual density arguments, we can assume that $\Phi\in L^2 \left(\frac{dm}{|y|}\right)\cap L^p(\mathbb{C})$
for a fixed $p>0$ such that $2< p\le
1+\frac{1}{\|\mu\|_{\infty}}$. In this case $(I-\mu S)^{-1}\colon L^p(\mathbb{C})\to
L^p(\mathbb{C})$~\cite{AIS}, therefore $h\in L^p (\mathbb{C})$,
where $p>2$. Define
$$H(z)=Th(z)=-\frac{1}{\pi}\int_{\mathbb{C}}\frac{h(w)}{w-z}\,dm(w)
$$
Then $H$ is continuous on~$\mathbb{C}$, $\overline{\partial}H=h$
and $\partial H=Sh$. Thus \eqref{eq3.2} reads as
$$
\overline{\partial}H-\mu \partial H=\Phi.
$$

By applying the  quasiconformal change of variables $u=H\circ\rho^{-1}$ as in \cite{AIS}, we get that $H=u\circ\rho$ and
\begin{align*}
h=\overline{\partial}H&=(\partial
u\circ\rho)\overline{\partial}\rho+(\overline{\partial}u\circ\rho)\overline{\partial\rho}\\*[3pt]
\partial H&=(\partial
u\circ\rho)\partial\rho+(\overline{\partial}u\circ\rho)\overline{\overline{\partial}\rho}.
\end{align*}
Since $\overline{\partial}\rho=\mu\partial\rho$, we obtain
$$
\Phi=\overline{\partial}H-\mu\partial
H=(I-|\mu|^2)(\overline{\partial}u\circ\rho)\overline{\partial
\rho}.
$$
Consequently
$$
(\overline{\partial}u\circ\rho)\overline{\partial\rho}=\frac{\Phi}{1-|\mu|^2}.
$$
Since $\|\mu\|_{\infty}<1$, this implies that
$$
\|(\overline{\partial}u\circ\rho)\overline{\partial\rho}\|_{L^2\left(\frac{dm}{|y|}\right)}\simeq
\|\Phi\|_{L^2\left(\frac{dm}{|y|}\right)}
$$
with equivalence constants depending only on~$\|\mu\|_{\infty}$.
So, the estimate~\eqref{eq3.1} will be proved if we can show that
\begin{equation}\label{eq3.3}
\int_{\mathbb{C}}\frac{|\partial
u\circ\rho|^2}{|y|}|\mu|^2|\partial\rho|^2\,dm\le
c\int_{\mathbb{C}}\frac{|\overline{\partial}u\circ\rho|^2}{|y|}|\partial\rho|^2\,dm.
\end{equation}

\noindent Letting $w=\rho(z)$ and $v=\overline{\partial}u$, the above expression is equivalent to
\begin{equation}\label{eq3.4}
\int_{\mathbb{C}}
\frac{|\mu\circ\rho^{-1}(w)|^2}{\operatorname{dist}(\rho^{-1}(w),\mathbb{R})}|Sv(w)|^2\,dm(w)\le
c \int_{\mathbb{C}}
\frac{|v(w)|^2}{\operatorname{dist}(\rho^{-1}(w),\mathbb{R})}\,dm(w).
\end{equation}

This setting is very similar to the one in
Theorem~\ref{theo1}. Since $\rho$ is bilipschitz,
 \newline $\operatorname{dist}(\rho^{-1}(w),\mathbb{R})\simeq
 \operatorname{dist}(w,\Gamma)$, and if we define
 $\tilde{\mu}(w)=\mu\circ\rho^{-1}(w)$, it can be easily checked that
 $\tau(w)=\frac{|\tilde{\mu}(w)|^2}{\operatorname{dist}(w,\Gamma)}\,dm $
 represents a Carleson measure with respect to~$\Gamma$ with $\|\tau\|_C\simeq \|\nu\|_C$. Thus,
 \ref{eq3.4}~is equivalent to the following claim:

 \begin{claim}
The operator $(\tilde{\mu}S)$ is bounded on
$L^2\left(\frac{dm}{\operatorname{dist}(w,\Gamma)}\right)$ whenever
the
measure\newline $\tau=\frac{|\tilde{\mu}(w)|^2}{\operatorname{dist}(w,\Gamma)}\,dm $
is a Carleson measure with respect to~$\Gamma$, i.e.\ if
$$
\int_{B(x_{0},r)} \frac{|\tilde{\mu}(w)|^2}{\operatorname{dist}(w,\Gamma)}\,dm(w)\le c_{1}r;\qquad \forall\; x_{0}\in\Gamma,\,r>0,
$$
then
$$
\int_{\mathbb{C}}\frac{|\tilde{\mu}(w)|^2}{\operatorname{dist}(w,\Gamma)} |Sf(w)|^2\,dm(w)\le c_{2}\int_{\mathbb{C}}|f(w)|^2 \frac{dm(w)}{\operatorname{dist}(w,\Gamma)}
$$
for all (compactly supported) functions~$f$ with $\int_{\mathbb{C}}\frac{|f(w)|^2}{\operatorname{dist}(w,\Gamma)}\,dm(w)<\infty$.
\end{claim}

To prove this claim we will follow precisely the same steps
as  in the proof of $\emph{(2)}\Rightarrow\emph{(1)}$ in Theorem~\ref{theo1bis}

Denote by $\Omega^{\pm}$  the two domains bounded
by~$\Gamma$. In this context, the following lemma will be the equivalent of Lemma~\ref{lem1}. Since it can be
proved in a similar way, we will omit its proof.

\begin{lem}\label{lem3}
The operator~$\tilde{K}$ defined by
$$
\tilde{K}f(z)=(\textup{dist}(z,\Gamma))^{1/2} \int_{\Omega^-}\frac{f(w)(\operatorname{dist}(w,\Gamma))^{1/2}}{|w-z|^3}\,dm(w),\qquad z\in\Omega^+
$$
represents a bounded operator from $L^2(\Omega_{-})$ to $L^2(\Omega_{+})$.
\end{lem}

Assuming that $\operatorname{supp}(\tilde{\mu})\subset\Omega^+$, we decompose the integral
$$
\|(\tilde{\mu}S)f\|^2_{L^2\left(\frac{dm}{\operatorname{dist}(w,\Gamma)}\right)}=\frac{1}{\pi}\int_{\Omega^+}\frac{|\tilde{\mu}(z)|^2}{\operatorname{dist}(z,\Gamma)}\left|\int_{\mathbb{C}}\frac{f(w)}{(w-z)^2}\,dm(w)\right|^2\,dm(z)=\tilde{I}_{1}+\tilde{I}_{2}+\tilde{I}_{3}
$$
where $\tilde{I}_{i}$ ($i=1,2,3$) are the analogous of $I_{i}$ ($i=1,2,3$) in~(\ref{eq2.2}), using in this case a Whitney decomposition of~$\Omega^+$.

To estimate $\tilde{I}_1$, we  apply the following result in \cite{Z}: if $\Omega$ is a chord-arc domain, and $\sigma$ is a Carleson measure with respect
to~$\partial\Omega$, then for any function in the Hardy space $F\in
\mathbf{H}^2(\Omega)$
$$
\int_{\Omega} |F(w)|^2\,d\sigma (w)\le
c(\|\sigma\|_C)\int_{\partial\Omega}|F(\xi)|^2\,|d\xi|.
$$

The next ingredient we need is a substitute  for chord-arc domains of ``Green's formula''  ~\cite{JK}:

If~$\Omega$ is a chord-arc domain and $F\in \mathbf{H}^2(\Omega)$ then
$$
\int_{\partial\Omega} |F(\xi)|^2\,|d\xi|\simeq \int_{\Omega}
|F'(w)|^2\operatorname{dist} (w,\Gamma)\,dm(w).
$$

 By applying these results together with Lemma~\ref{lem3}, as in the estimate of $I_{1}$ in Theorem~\ref{theo1bis}, we conclude:
\begin{equation*}
\begin{split}
\tilde{I}_{1}&\lesssim \int_{\Gamma}|S(f\chi_{\Omega^-})(\xi)|^2\,|d\xi|\\*[5pt]
&\simeq \int_{\Omega^+}|(S(f\chi_{\Omega^-}))'(z)|^2\operatorname{dist}(z,\Gamma)\,dm(z)\\*[5pt]
&\lesssim c\|f\|^2_{L^2\left(\frac{dm}{\operatorname{dist}(z,\Gamma)}\right)}.
\end{split}
\end{equation*}
with comparison constants depending on $\Gamma$ and $\|\nu\|_C$.

To estimate $\tilde{I}_{2}$, we just replace  in $I_{2}$ the conjugate of a
point~$z$, i.e.~$\overline{z}$, by the quasiconformal reflection~$r\colon\Omega^+\to\Omega^-$,
defined by $r(z)=\rho(\overline{\rho^{-1}(z)})$. Then $|z-r(z)|\simeq
\operatorname{dist}(z,\Gamma)$ and the desired result holds for $\tilde{I}_{2}$.

Similarly, to estimate $\tilde{I}_{3}$ we proceed as in  $I_{3}$ using in this case
the Whitney decomposition of $\Omega^+$, and the fact that $S$ is a
bounded operator on~$L^2(\mathbb{C})$.

This concludes the proof of (1) $\Rightarrow$ (2) in Theorem~\ref{theo2}.
\end{proof}

\section{Invertibility implies chord-arc condition}\label{sec4}

In this section we will prove the remaining implication in Theorem~\ref{theo2}. That is:

\noindent Let $\mu\in L^\infty(\mathbb{C})$ be compactly supported with $\|\mu\|_\infty \le 1$, $\rho=\rho_\mu$ the associated  quasiconformal mapping and $\Gamma=\rho_{\mu}(\mathbb{R})$. If $|\mu|^2/|y|$ is a Carleson measure and   the operator ~$(I-\mu S)\colon L^2\left(\frac{dm}{|y|}\right)\to
L^2\left(\frac{dm}{|y|}\right)$ is invertible, then $\Gamma$ is a chord-arc curve.

 By the results mentioned in the introduction, in particular Theorem B,
we  know that  if $|\mu|^2/|y|$ is a Carleson measure then $\Gamma$ is a BJ quasicircle. We will use the estimates
 on the invertibility of the operator~$(I-\mu S)$ given
by \eqref{eq3.1}, i.e.
$$
\int_{\mathbb{C}}|(I-\mu S)^{-1}(\Phi)|^2\frac{dm}{|y|}\le c\int_{\mathbb{C}}|\Phi|^2\frac{dm}{|y|}
$$
to show that $\Gamma$ is also rectifiable and in fact chord-arc.

Firstly, we state the following lemma that we will be applied in the next result. It is a simple corollary of Fubini's theorem, so we omit the proof.

\begin{lem}\label{lem4}
Assume $g$ has compact support with $g\in L^p(\mathbb{C})$ for
some $p>2$. Let
$$
H(z)=Tg(z)=~-\frac{1}{\pi}\int_{\mathbb{C}}\frac{g(w)}{w-z}\,dm(w)
$$
and suppose $h\in L^2(\mathbb{R})$ has compact support in
$\mathbb{R}$. Then
\begin{equation}\label{eq4.2}
\int_{\mathbb{R}}H(x)h(x)\,dx=2i\int_{\mathbb{C}}g(z)C_h(z)\,dm(z).
\end{equation}
\end{lem}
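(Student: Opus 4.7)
The statement is a straightforward reciprocity identity, so the plan is essentially an application of Fubini's theorem together with the definition of the Cauchy transform off the real line. First I would write
\[
\int_{\mathbb{R}} H(x) h(x)\, dx
= -\frac{1}{\pi}\int_{\mathbb{R}} h(x) \int_{\mathbb{C}} \frac{g(w)}{w-x}\, dm(w)\, dx,
\]
and then swap the order of integration. The second step is to recognize that, for $w \notin \mathbb{R}$,
\[
\int_{\mathbb{R}} \frac{h(x)}{x-w}\, dx = 2\pi i\, C_h(w),
\]
directly from the definition of the Cauchy integral of $h$ off the real line. Multiplying by $\tfrac{1}{\pi}$ and accounting for the sign change $w-x = -(x-w)$ produces exactly the claimed factor $2i$. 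The fact that $C_h$ is only defined a.e.\ on $\mathbb{R}$ via a principal value is irrelevant here, since $\mathbb{R}$ has planar Lebesgue measure zero and the outer integral on the right-hand side is taken with respect to $dm$ on $\mathbb{C}$.

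The only nontrivial point is justifying the interchange of integrals. Since $g$ is compactly supported in $\mathbb{C}$ with $g \in L^p$ for some $p>2$ and $h$ is compactly supported in $\mathbb{R}$ with $h \in L^2$, everything takes place inside a fixed large ball $B(0,R)$. It then suffices to verify
\[
\int_{B(0,R)}\int_{[-R,R]} \frac{|g(w)|\,|h(x)|}{|w-x|}\, dx\, dm(w) < \infty.
\]
One bounds the inner integral by Hölder, using the local $L^q$-integrability of $1/|w-x|$ in $x$ (for $q<1$'s conjugate index, which is always fine since $1/|t|$ is locally $L^q$ on $\mathbb{R}$ for $q<1$; alternatively simply use $h \in L^\infty$ after approximation, or fall back on the $L^p$\Ndash $L^{p'}$ duality applied to $Tg$), and then integrates in $w$ using $g \in L^p(B(0,R))$. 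No singular analysis is needed because the integrand has only a mild $1/|w-x|$ singularity, which is integrable in any compact piece of $\mathbb{C} \times \mathbb{R}$.

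The main (mild) obstacle is therefore bookkeeping around the Fubini justification; once that is in place, the identity is a one-line rearrangement. Given the hypotheses stated in the lemma the verification is routine, and I would treat it briefly and then apply the identity in the subsequent argument where $g = h = \overline{\partial}\rho|_{\mathbb{R}}$-type objects are tested against the Cauchy transform on $\Gamma$.
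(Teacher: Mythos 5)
Your proposal is correct and takes the same route as the paper, which omits the proof entirely with the remark that the lemma ``is a simple corollary of Fubini's theorem''; your bookkeeping of the constants ($-\tfrac1\pi$ times $-(2\pi i)$ giving $2i$) is right. One caveat on the Fubini justification: the order you describe first does not work, because $x\mapsto |w-x|^{-1}$ is not locally integrable on $\mathbb{R}$ when $w\in\mathbb{R}$ and ``H\"older with $q<1$'' is not a valid inequality; instead integrate in $w$ first, noting that for each fixed $x\in[-R,R]$ one has $\int_{B(0,R)}|g(w)|\,|w-x|^{-1}\,dm(w)\le \|g\|_{L^p}\,\bigl\||\cdot|^{-1}\bigr\|_{L^{p'}(B(0,2R))}<\infty$ uniformly in $x$ (since $p>2$ gives $p'<2$ and $|w|^{-p'}$ is locally integrable on the plane), and then integrate this uniform bound against $|h|\in L^1(\mathbb{R})$ -- which is essentially the fallback you already mention via $g\in L^p(B(0,R))$.
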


\noindent Here note that $H$ is continuous and $\overline{\partial}H=g$.

The next result, that will be needed later, gives a sufficient condition for a quasicircle to be rectifiable.

\begin{theor}\label{theo3bis}
Let~$\rho\colon\mathbb{C}\to \mathbb{C}$ be a quasiconformal mapping, analytic at~$\infty$, such that $\int_{\mathbb{C}}{\frac{|\bar{\partial}\rho|^2}{|y|}}dm<\infty$. Then $\Gamma=\rho(\mathbb{R})$ is locally rectifiable and $\rho'|_{\mathbb{R}}\in L^2_{loc}$.

\end{theor}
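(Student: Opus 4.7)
The starting point is the representation $\rho(z) = z + Th(z)$ with $h := \bar{\partial}\rho$. Since $\rho$ is analytic at $\infty$, the function $h$ has compact support, and by the higher integrability of $\bar{\partial}\rho$ for quasiconformal maps, $h \in L^p(\mathbb{C})$ for some $p > 2$; hence $Th$ is continuous on $\mathbb{C}$. To obtain $\rho'|_{\mathbb{R}} \in L^2_{loc}$ and local rectifiability, I would test the distributional derivative of the continuous function $\rho|_{\mathbb{R}}$ against arbitrary $\phi \in C_c^\infty(\mathbb{R})$ and prove $|\langle (\rho|_{\mathbb{R}})', \phi\rangle| \le C_I \|\phi\|_{L^2(\mathbb{R})}$ whenever $\operatorname{supp}\phi \subset I$, for each bounded interval $I$. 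By Riesz representation this identifies the distributional derivative with an $L^2_{loc}$ function, whence $\rho|_{\mathbb{R}}$ is locally absolutely continuous and $\ell(\rho([a,b])) = \int_a^b|\rho'(x)|\,dx < \infty$, giving the two conclusions of the theorem simultaneously.

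Splitting $\rho = z + Th$ and using $-\int x\,\phi'(x)\,dx = \int \phi(x)\,dx$, the pairing becomes
\[
\langle (\rho|_{\mathbb{R}})', \phi\rangle = \int_{\mathbb{R}}\phi(x)\,dx \;-\; \int_{\mathbb{R}} Th(x)\,\phi'(x)\,dx.
\]
The second integral is exactly the setting of Lemma~\ref{lem4} applied with $g=h$ and test function $\phi'$ (compactly supported, in $L^2(\mathbb{R})$), so it equals $2i\int_{\mathbb{C}} h(z)\,C_{\phi'}(z)\,dm(z)$. The decisive step is then Cauchy--Schwarz with the paired weights $1/|y|$ and $|y|$:
\[
\left|\int_{\mathbb{C}} h\,C_{\phi'}\,dm\right| \;\le\; \left(\int_{\mathbb{C}} \frac{|h|^2}{|y|}\,dm\right)^{1/2}\left(\int_{\mathbb{C}} |y|\,|C_{\phi'}(z)|^2\,dm\right)^{1/2}.
\]

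The first factor is finite by the standing hypothesis. The key estimate is on the second factor. Integration by parts in the Cauchy kernel (valid because $\operatorname{supp}\phi$ is compact) gives $C_{\phi'}(z) = (d/dz)C_\phi(z)$, and $C_\phi$ lies in $H^2$ of each half-plane with norm $\lesssim \|\phi\|_{L^2(\mathbb{R})}$. The classical Littlewood--Paley identity (Plancherel in $x$ and the elementary integral $\int_0^\infty y\,e^{-4\pi\xi y}\,dy = 1/(16\pi^2\xi^2)$) then yields, applied separately on $\mathbb{R}^2_+$ and $\mathbb{R}^2_-$,
\[
\int_{\mathbb{C}} |y|\,|(C_\phi)'(z)|^2\,dm(z) \;\lesssim\; \|\phi\|_{L^2(\mathbb{R})}^2.
\]
Combining this with $\|\phi\|_{L^1(I)} \le |I|^{1/2}\|\phi\|_{L^2}$ gives $|\langle (\rho|_{\mathbb{R}})',\phi\rangle| \le C_I\|\phi\|_{L^2(\mathbb{R})}$, closing the argument.

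The main obstacle is finding the right weighted Cauchy--Schwarz pairing: one must marry the weight $1/|y|$ from the hypothesis to the weight $|y|$ arising from the Littlewood--Paley square function of the Cauchy integral, and this has to be done on both half-planes simultaneously because $h$ is not assumed to vanish near $\mathbb{R}$. A secondary technical point is the applicability of Lemma~\ref{lem4}, which requires $h \in L^p$ for some $p>2$; this comes for free from Astala-type higher integrability together with the compact support of $\mu$.
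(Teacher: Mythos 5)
Your argument is correct, and its skeleton coincides with the paper's: normalize so that $\rho(z)=z+T(\bar\partial\rho)(z)$, dualize, convert the boundary pairing into a bulk integral via Lemma~\ref{lem4}, and apply Cauchy--Schwarz with the paired weights $1/|y|$ and $|y|$ so that the hypothesis absorbs one factor. Where you genuinely diverge is in what gets dualized and, consequently, in the final estimate. The paper bounds the \emph{difference quotients} $\frac{1}{h}(\rho(x+h)-\rho(x))$ uniformly in $L^2_{\mathrm{loc}}$ (invoking the standard difference-quotient characterization of Sobolev functions), which after Lemma~\ref{lem4} and Green's formula leaves the quantity $\|K_h\ast g\|_2$ with the explicit kernel $K(x)=2\log\bigl|\tfrac{1+x}{x}\bigr|$, handled by computing $\widehat{K}$. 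You instead pair the distributional derivative of $\rho|_{\mathbb{R}}$ directly against $\phi\in C_c^\infty$, throw the derivative onto the test function, and are left with $\int|y|\,|C_{\phi'}|^2\,dm=\int|y|\,|(C_\phi)'|^2\,dm\lesssim\|\phi\|_2^2$, which is the standard Littlewood--Paley identity for $H^2$ of each half-plane (the same ``Green's formula'' fact the paper already uses in estimating $I_1$ in Theorem~\ref{theo1bis}). Your route avoids the ad hoc kernel and its Fourier transform entirely, at the cost of a slightly more careful bookkeeping step at the end (Riesz representation to identify the distributional derivative, plus continuity of $\rho|_{\mathbb{R}}$ to upgrade to absolute continuity); both are routine. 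You also correctly flag the one hypothesis the paper leaves implicit, namely that $h=\bar\partial\rho\in L^p(\mathbb{C})$ for some $p>2$ with compact support (higher integrability plus analyticity at $\infty$), which is what legitimizes the appeal to Lemma~\ref{lem4}.
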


\begin{proof}
 Normalize $\rho$ so that $\rho(z)=z+O(1/z)$. To prove the theorem it is enough to show that the difference quotients $\frac{1}{h}(\rho(x+h)-\rho(x))$ are uniformly bounded in $L_{\operatorname{loc}}^2$~(see for example Sect.~7.11 in \cite{GT}). For that, we will use a duality argument. So, let g be a test function in $L^2(\mathbb{R})$  with
$\|g\|_{L^2(\mathbb{R})}=1$. Since $T(\overline{\partial}\rho)(z)=\rho(z)-z$, by lemma 4

\begin{equation*}
\begin{split}
\int_{\mathbb{R}}&\left(\frac{\rho(x+h)-\rho(x)}{h}-1\right)g(x)\,dx=
\int_{\mathbb{C}}\frac{\overline{\partial}\rho(z+h)-\overline{\partial}\rho(z)}{h}\,C_g(z)\,dm\\*[5pt]
&=\int_{\mathbb{C}}\overline{\partial}\rho(z)\frac{C_g(z-h)-C_g(z)}{h}\,dm\\*[5pt]
&\le \left(\int_{\mathbb{C}} \frac{|\overline{\partial}\rho(z)|^2}{|y|}dm\right)^{1/2}\left(\int_{\mathbb{C}}
|y|\left|\frac{C_g(z-h)-C_g(z)}{h}\right|^2dm\right)^{1/2}
\end{split}
\end{equation*}
The first term is finite by hypothesis. To bound the second one note that
\begin{equation*}
\begin{split}
\frac{C_g(z-h)-C_g(z)}{h}&=\frac{1}{h}\int_{\mathbb{R}}g(x)\left(\frac{1}{x-z+h}-\frac{1}{x-z}\right)\,dx\\*[5pt]
&=-\frac{1}{h}\partial_z\int_{\mathbb{R}}g(x)(\log{|x-z+h|^2}-\log{|x-z|^2})\,dx\\*[5pt]
&=-\frac{2}{h}\partial_z\int_{\mathbb{R}}g(x)\log{\frac{|x-z+h|}{|x-z|}}\,dx.
\end{split}
\end{equation*}
So, by Green's formula
\begin{equation*}
\begin{split}
\int_{\mathbb{C}}|y|\left|\frac{C_g(z-h)-C_g(z)}{h}\right|^2dm&=
\int_{\mathbb{R}}\left|\int_{\mathbb{R}}g(x)\frac{2}{h}\log{\frac{|x-y+h|}{|x-y|}}\,dx\right|^2dy\\*[5pt]
&=\|K_h\ast g\|_{L^2}^2
\end{split}
\end{equation*}
where $K_h(x)=\frac{1}{h}K(\frac{x}{h})$ and $K(x )=2\log{|\frac{1+x}{x}|}$.
\bigskip

\noindent By Plancherel's formula and the properties of the Fourier transform
$$
\|K_h\ast g\|_2\le \|\widehat{K_h}\|_\infty\|\widehat{g}\|_2=\|\widehat{K_h}\|_\infty
$$
 Since
 $\widehat{K_h}(\xi)=\widehat{K}(h\xi)$, we obtain the uniform bound if
$\|\widehat{K}\|_\infty<\infty$.
An easy computation shows that~$\widehat{K}(\xi)=c\,\frac{e^{2\pi i\xi}-1}{|\xi|}$ where $c$ is a complex constant, therefore~$\widehat{K}\in L_\infty(\mathbb{R})$.

\end{proof}

We shall describe  now some of the ideas developed by Semmes in~\cite{S} to
prove Theorem~\ref{theoA}, and the $L^2$ boundedness of the Cauchy
integral on chord-arc curves with small constant. They will play a
fundamental role in the rest of this section.

Recall from Section~\ref{sec1} that if~$F=C_{\Gamma}f$,
 is the Cauchy integral of a function~$f$, then the expression~$\mu F'$ is well defined a.a.\ $z\in\mathbb{C}\setminus \Gamma$ and it does
not contain any distributional terms. Besides the jump of $F$ across $\Gamma$ is exactly $f$.

The  approach in~\cite{S} was to think of the Cauchy integral on~$\Gamma$ as a
solution to a $\overline{\partial}$-problem relative to~$\Gamma$ and then,
by a change of variables,  reduce it to a
$(\overline{\partial}-\mu\partial)$ problem relative
to~$\mathbb{R}$.

\noindent More precisely, let $g$ be a function defined
on~$\Gamma$ and let $G=C_{\Gamma}g$. So $G$ represents a holomorphic
function on $\mathbb{C}\setminus \Gamma$ with jump~$g$ on~$\Gamma$.
Suppose that $\rho$ is a quasiconformal mapping
with~$\rho(\mathbb{R})=\Gamma$, then $f=g\circ\rho$ is now a
function defined on~$\mathbb{R}$. Furthermore $F(z)=G\circ \rho$ satisfies
that $\overline{\partial}F-\mu
\partial F=0$ on $\mathbb{C}\setminus \mathbb{R}$ and that its jump  on $\mathbb{R}$ is $f$.

Consider now the function  $H=F-C_f$ defined on $\mathbb{C}\setminus \mathbb{R}$. Since $C_f$ is holomorphic off $\mathbb{R}$ and its on   jump on $\mathbb{R}$ is ~$f$ as well, the function  $H$ has no jump across $\mathbb{R}$, and satisfies the equation
$$
\overline{\partial}H-\mu \partial H=\mu C'_f.
$$
Because $H$ has no distributional terms, one can think of this equation as holding
on~$\mathbb{C}$ when integrated in the sense of distributions.

\noindent The problem of studying the $L^2$ boundedness of the Cauchy integral on $\Gamma$ can be transfered in this way into a problem on the boundary values of $F$ on $\mathbb{R}$, or equivalently on finding estimates on the boundary values of $H=(\overline{\partial} -\mu\partial)^{-1}
(\mu C'_f)$ for appropriate dilatations~$\mu$. As we will show next, this problem is closely
related to the invertibility of the operator~$(I-\mu S)$
on~$L^2\left(\frac{dm}{|y|}\right)$.

\begin{propo}\label{propo1}
Let $\mu\in L^{\infty}(\mathbb{C})$, $\|\mu\|_{\infty}<1$, and $\nu=|\mu|^2/|y|~dm$ a
Carleson measure. If the operator~$(I-\mu S)\colon L^2\left(\frac{dm}{|y|}\right)\to
L^2\left(\frac{dm}{|y|}\right)$ is invertible then the following holds:

If $f\in
L^2(\mathbb{R})$ and $H\in
W^{1,2}_{\operatorname{loc}}(\mathbb{C})$ satisfies:
\begin{equation}\label{eq4.1}
\overline{\partial}H-\mu \partial H=\mu C'_f \qquad \text{a.a.\ $z\in\mathbb{C}$}
\end{equation}
then the boundary values $H\bigr\rvert_{\mathbb{R}}$ belong to
$L^2(\mathbb{R})$ and $\|H\bigr\rvert_{\mathbb{R}}\|_2\le
c\|f\|_2$ for some constant $c>0$.
\end{propo}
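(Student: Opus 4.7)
The plan is to combine Lemma~\ref{lem4}'s duality with the invertibility of $(I-\mu S)$ on $L^2(dm/|y|)$, used in its transposed form. First I would reduce to $f\in C^\infty_c(\mathbb{R})$ by density, so that $\mu C'_f$ is bounded with compact support and $h := \overline{\partial}H \in L^2(\mathbb{C})$; compact support of $\mu$ forces $H$ to decay at infinity, so $H=Th$ and $\partial H = Sh$, and the hypothesis reduces to $(I-\mu S)h = \mu C'_f$. Lemma~\ref{lem4} then gives $\int_\mathbb{R} Hg\,dx = 2i\int_\mathbb{C} h\,C_g\,dm$ for any compactly supported $g\in L^2(\mathbb{R})$, and by duality in $g$ the task becomes the bilinear bound
$$
\left| \int_\mathbb{C} h(z)\,C_g(z)\,dm(z) \right| \le C\|f\|_2 \|g\|_2.
$$

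The hard part will be that a direct application of invertibility only yields $\|h\|_{L^2(dm/|y|)}\lesssim \|\mu C'_f\|_{L^2(dm/|y|)}$, and the latter is not controlled by $\|f\|_2$ because $C'_f$ is not a Hardy space function. My plan for circumventing this is to transpose the operator. Since $S^T = S$ under the pairing $(u,v)=\int uv\,dm$, one has $(I-\mu S)^T = I - S\mu$, and therefore
$$
\int h\,C_g\,dm \;=\; \int \mu C'_f\,(I-S\mu)^{-1}C_g\,dm \;=\; \int C'_f\,\phi_1\,dm,
$$
where $\phi_1 := \mu(I-S\mu)^{-1} C_g$ satisfies $(I-\mu S)\phi_1 = \mu C_g$. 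The point is that $C_g$ is a Cauchy integral, hence lies in $H^2$ of each half-plane with norm $\lesssim\|g\|_2$, so the Carleson embedding theorem, applied to the Carleson measure $\nu = |\mu|^2/|y|\,dm$, gives $\|\mu C_g\|_{L^2(dm/|y|)}^2 = \int |C_g|^2\,d\nu \le C\|\nu\|_C\|g\|_2^2$. Invertibility of $(I-\mu S)$ on $L^2(dm/|y|)$ then produces $\|\phi_1\|_{L^2(dm/|y|)}\le C\|g\|_2$.

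To close, Fubini combined with $S^T = S$ gives the integration-by-parts identity $\int C'_f\,\phi\,dm = \frac{i}{2}\int_\mathbb{R} f(x)\,S\phi(x)\,dx$ for compactly supported $\phi$; applied to $\phi=\phi_1$ it produces $\int h C_g\,dm = \frac{i}{2}\int_\mathbb{R} f(x)\,S\phi_1(x)\,dx$. The remaining task is to bound $\|S\phi_1|_\mathbb{R}\|_{L^2(\mathbb{R})}$, for which I would use a dual Lemma-\ref{lem4}-type identity obtained by integrating the kernel $1/(w-x)^2 = \partial_x[1/(w-x)]$ by parts in $x$:
$$
\int_\mathbb{R} S\phi_1(x)\,G(x)\,dx \;=\; -2i\int_\mathbb{C} \phi_1(w)\,C'_G(w)\,dm(w)
$$
for $G\in L^2(\mathbb{R})$ of compact support. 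Cauchy--Schwarz with weights $|y|^{\pm 1}$ and the Littlewood--Paley identity $\|C'_G\|_{L^2(|y|\,dm)}\simeq\|G\|_2$ then yield $\|S\phi_1|_\mathbb{R}\|_{L^2(\mathbb{R})}\le C\|\phi_1\|_{L^2(dm/|y|)}\le C\|g\|_2$, and chaining the three estimates delivers $|\int_\mathbb{R} Hg\,dx|\le C\|f\|_2\|g\|_2$, whence the desired bound by duality. The whole argument turns on the transposition manoeuvre, which swaps the roles of $f$ and $g$ to replace the intractable source $\mu C'_f$ by the controllable source $\mu C_g$, for which the Hardy-space Carleson embedding applies.
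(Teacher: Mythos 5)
Your argument is correct and is essentially the paper's own proof: duality of $H\bigr\rvert_{\mathbb{R}}$ against a compactly supported test function, Lemma~\ref{lem4}, the transposition $(I-\mu S)^{-1}\mu=\mu(I-S\mu)^{-1}$ so that the resolvent lands on $\mu C_g$ rather than on the uncontrollable $\mu C'_f$, the Carleson embedding $\|\mu C_g\|_{L^2(dm/|y|)}\lesssim\|g\|_2$, the invertibility hypothesis, and the Littlewood--Paley (Green's formula) estimate. The only divergence is your final detour through $S\phi_1\bigr\rvert_{\mathbb{R}}$ and a second duality, which unwinds to exactly the weighted Cauchy--Schwarz bound $\bigl|\int_{\mathbb{C}} C'_f\,\phi_1\,dm\bigr|\le\|C'_f\|_{L^2(|y|\,dm)}\,\|\phi_1\|_{L^2(dm/|y|)}\lesssim\|f\|_2\|g\|_2$ that the paper applies directly.
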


\begin{proof}

 By standard density arguments, we can assume that the function $f$ in \eqref {eq4.1} belongs to the  the following family  of functions in  $L^2(\mathbb{R})$
$$
\mathcal{F}=\{f\in L^2(\mathbb{R});\, C'_f\in
L^{p_0}(\mathbb{C})
\text{ for some }2< p_0\le 1+\frac{1}{\|\mu\|_{\infty}}\},
$$
where $C'_f(z)$ denotes the derivative of the holomorphic function~$C_f(z)$ defined
off $\mathbb{R}$, with no
distributional term involved.

We know that $(I-\mu S)^{-1}\colon L^p(\mathbb{C})\to
L^p(\mathbb{C})$ when $2\le p\le 1+\frac{1}{\|\mu\|_{\infty}}$~\cite{AIS}. Therefore, if $f\in \mathcal{F}$
 then $\overline{\partial} H= (I-\mu S)^{-1}(\mu C'_f)\in L^{p_0}(\mathbb{C})$. Note also that, since $C'_f\in L^{p_0}(\mathbb{C})$

\begin{equation}\label{eq4.2}
\mu (I-S\mu)^{-1}C'_f=(I-\mu S)^{-1}\mu C'_f=\overline{\partial}H.
\end{equation}
In particular, $\overline{\partial}H$ has compact support since $\mu$ does.

To estimate the $L^2$ norm of the boundary values of the function $H$ we use a duality argument, that is
$$
\|H\bigr\rvert_{\mathbb{R}}\|_2=\sup
\left\{\left|\int_{\mathbb{R}} H(x)h(x)\,dx\right|;\,h\in
L^2(\mathbb{R});\, \|h\|_2=1\text{ and $h$ has compact support}\right\}.
$$

\noindent Hence, by Lemma~\ref{lem4} and~\eqref{eq4.2}, we get
\begin{equation*}
\begin{split}
\int_{\mathbb{R}}H(x)h(x)\,dx&=2i\int_{\mathbb{C}}\overline{\partial}H(z)C_h(z)\,dm(z)\\*[5pt]
&=2i \int_{\mathbb{C}} (I-\mu S)^{-1} (\mu C'_f)(z)
C_h(z)\,dm(z)\\*[5pt]
&=2i \int_{\mathbb{C}} \mu (z)
(I-S\mu)^{-1}(C'_f)(z)C_h(z)\,dm(z)\\*[5pt]
&=2i \int_{\mathbb{C}}
C'_f(z)(I-\mu S)^{-1}(\mu C_h)(z)\,dm(z).
\end{split}
\end{equation*}
Since $\frac{|\mu|^2}{|y|}$ is a Carleson measure, and the
Cauchy integral is bounded on $L^2(\mathbb{R})$,
$$
\int_{\mathbb{C}} \frac{|\mu(z)|^2}{|y|} |C_h(z)|^2\,dm(z)\le
c\int_{\mathbb{R}} |C_h(x)|^2\,dx\le
c~\|h\|^2_{L^2(\mathbb{R})}=c<\infty
$$
that is, $\mu C_h\in L^2\left(\frac{dm}{|y|}\right)$ with norm
depending on $\|\nu\|_C$.

\noindent Applying the Cauchy-Schwarz inequality and the assumption on the invertibility of the
operator~$(I- \mu S)$ on $L^2\left(\frac{dm}{|y|}\right)$, we obtain

\begin{equation*}
\begin{split}
\left|\int_{\mathbb{R}}H(x)h(x)\,dx\right|&\lesssim
\left(\int_{\mathbb{C}}|C'_f(z)|^2|y|\,dm(z)\right)^{1/2}
\left(\int_{\mathbb{C}}\frac{|(I-\mu
S)^{-1}(\mu C_h)(z)|^2}{|y|}\,dm(z)\right)^{1/2}\\*[5pt]
&\lesssim
\left(\int_{\mathbb{C}}|y||C'_f(z)|^2\,dm(z)\right)^{1/2} \left( \int_{\mathbb{C}} \frac{|\mu(z)|^2}{|y|} |C_h(z)|^2\,dm(z\right)^{1/2}\\*[5pt]
&\lesssim\|C_f(x)\|_2\lesssim\|f\|_2.
\end{split}
\end{equation*}
with comparison constants depending on $\|\nu\|_C$ and on the norm of the operator \newline $(I-\mu S)^{-1}$.

\noindent The last two inequalities follow from  Green's formula and
the boundedness of the Cauchy integral on $L^2(\mathbb{R})$, ending the proof of Proposition~\ref{propo1}.
\end{proof}

We are ready now to present the proof of the remaining implication in Theorem 2.

\begin{proof}[Proof of (2) $\Rightarrow$ (1) in Theorem~\ref{theo2bis}]

Recall from Section 1 that if  $\rho$
is the solution
to the Beltrami equation~$\overline{\partial}\rho-\mu \partial\rho=0$, then
$$
\overline{\partial}\rho=(I-\mu S)^{-1}(\mu).
$$

Since $\mu$ has compact support and $|\mu|^2/|y|$ is a Carleson measure, the function
$\mu$ belongs to the space $\L^2(\frac{dm}{|y|})$. The assumption on the invertibility of the operator~$(I-\mu S)$ on
$L^2 \left(\frac{dm}{|y|}\right)$, yields that $\overline{\partial}\rho\in \L^2(\frac{dm}{|y|})$. Therefore, by Theorem 3, we know that the quasicircle~$\Gamma=\rho(\mathbb{R})$  is rectifiable.

To show that $\Gamma$ is chord-arc, we will apply Proposition~\ref{propo1}. The argument that
follows is exactly the same as the one given by Semmes
in~\cite{S}. For the sake of completeness, let us recall the main points in his proof.

Let $f\in L^2(\mathbb{R})$. Set $F=C_{\Gamma}(f\circ \rho^{-1})\circ \rho$ and as before, define the difference $H=F-C_f$. Then $H$ has no jump
across~$\mathbb{R}$, and
satisfies
$$
\overline{\partial}H-\mu \partial H=\mu C'_f.
$$
The boundary values of~$F$ are given by:
$$
F(x)=\pm \frac{1}{2}f(x)+\frac{1}{2\pi i}\text{
P.V.}\int_{\mathbb{R}}\frac{f(y)}{\rho(y)-
\rho(x)}\,d\rho(y),\qquad x\in\mathbb{R}.
$$

Because $\Gamma=\rho(\mathbb{R})$ is rectifiable, $\rho'(y)$
exists a.a.\ $y\in \mathbb{R}$ and $d\rho(y)=\rho'(y)\,dy$. The
$L^2$-estimate on~$H\bigr\rvert_{\mathbb{R}}$ given by Proposition 1, and the boundedness
of $C_f$ on~$L^2(\mathbb{R})$ imply that
$$
Kf(x)=\text{P.V.}\int_{\mathbb{R}}\frac{f(y)}{\rho(y)-
\rho(x)}\rho'(y)\,dy, \qquad x\in \mathbb{R},
$$
defines a bounded operator on $L^2(\mathbb{R})$. We use this fact
to get estimates on~$|\rho'|$.

 Fix an interval~$I=(a,b)$
on~$\mathbb{R}$ and set $f=\overline{\rho'}X_I$. Then evaluate
$Kf$ on an interval~$J=(a',b')$ far away from~$I$ so that
$\rho(y)-\rho(x)$ is roughly constant if $y\in J$ and $x\in I$,
but not two far away so that $|\rho(y)-\rho(x)|$ is not two small.
This can be done because $\rho$ is quasiconformal. Using
$\|Kf\|_2\le c~\|f\|_2$ we will get estimates on~$\rho'$.

More precisely, fix $I=(a,b)$. Then, by the distortion theorem
for quasiconformal mappings, there are constants $C_1,C_2>0$,
depending only on~$\|\mu\|_{\infty}$ and an interval~$J$, such
that:
\begin{enumerate}
\item[(a)] $\frac{1}{C_1}|J|\le |I|\le C_1|J|$.
\item[(b)] $\frac{1}{C_2}|I|\le\operatorname{dist} (I,J)\le
C_2|I|$.
\item[(c)] $\left|e^{i\alpha}
-\frac{\rho(y)-\rho(x)}{|\rho(y)-\rho(x)|}\right|\le \frac{1}{10}$
for some $\alpha\in \mathbb{R}$.
\end{enumerate}

Denote by $c$ any constant depending on $\|\mu\|_\infty, \|\mu\|_C$ and on the norm of $(I-\mu S)^{-1}$. Define $f\in L^2(\mathbb{R})$ by $f=\overline{\rho'}X_{(a,b)}$.
Then
\begin{equation*}
\begin{split}
\int_I |\rho'(x)|^2\,dx&\ge c\|Kf\|_2^2\\*[5pt]
&\ge c\int_J
\left|\int_I
\frac{\overline{\rho'(y)}}{\rho(y)-\rho(x)}\rho'(y)\,dy\right|^2\,dx\\*[5pt]
&\ge c\int_J \left|\int_I
\frac{|\rho'(y)|^2}{|\rho(y)-\rho(x)|}\,dy\right|^2\,dx\\*[5pt]
&\ge c~\frac{|a-b|}{|\rho(a)-\rho(b)|^2}\left(\int_I
|\rho'(y)|^2\,dy\right)^2.
\end{split}
\end{equation*}

The third inequality is a consequence of (c). For  the last one we have applied once more the quasiconformal
distortion theorem: since $\operatorname{dist}(I,J)\simeq
|I|\simeq |J|$, we deduce that
$\operatorname{dist}(\rho(I),\rho(J))\simeq
\operatorname{diam}(\rho(I))$. Thus,
$$
\int_I |\rho'(y)|^2\,dy<c \frac{|\rho(a)-\rho(b)|^2}{|a-b|}.
$$
Therefore, by the Cauchy-Schwarz inequality we get
$$
\int_I |\rho'(y)|\,dy\le c\operatorname{diam}(\rho(I))
$$
for any interval~$I\subset \mathbb{R}$, which is precisely the chord-arc condition on the curve~$\Gamma$.

\end{proof}

\section{Applications and remarks}\label{sec5}
The results we have presented characterize the geometry of the curve and not its parametrization, in fact there are many parametrizations of chord-arc curves which are not bilipschitz. On the other hand, the proof of (1) $\Rightarrow$ (2) in Theorem~\ref{theo2bis} showed
that if~$\rho$ is bilipschitz and~$|\mu_\rho|^2/|y|$ is a Carleson measure then the operator~$(I-\mu_\rho S)$ is invertible  in
$L^2\left(\frac{dm}{|y|}\right)$. The next result shows that the converse does not hold.

\begin{propo}\label{propo2}
There exists a quasiconformal mapping~$\rho$ which is not
bilipschitz, and  $\mu=\mu_\rho$ satisfies that $|\mu|^2/|y|$ is a Carleson
measure and the operator $(I-\mu S)$ is invertible in
$L^2\left(\frac{dm}{|y|}\right)$.
\end{propo}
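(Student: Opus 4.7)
The plan is to exhibit a single explicit example: a compactly supported dilatation, concentrated in a ball lying entirely inside the open upper half-plane, with a local ``radial stretch'' profile that creates a power-law singularity at its center. This will force $\rho$ to fail the bilipschitz estimate while keeping $|\mu|^2/|y|$ well under control. Concretely, fix a point $z_0\in\mathbb{R}^2_+$, a radius $r_0$ with $0<r_0<\operatorname{Im}z_0$, and a small parameter $k\in(0,1)$ to be chosen, and set
$$
\mu(z)=k\,\frac{(z-z_0)^2}{|z-z_0|^2}\,\chi_{B(z_0,r_0)}(z),
$$
letting $\rho=\rho_\mu$ be the normalized quasiconformal solution of $\overline{\partial}\rho=\mu\,\partial\rho$ with $\rho(z)=z+O(1/z)$ at infinity.

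For the Carleson condition and invertibility, observe that $\operatorname{supp}\mu$ lies at distance $\operatorname{Im}z_0-r_0>0$ from $\mathbb{R}$, so $|\mu(z)|^2/|y|$ is uniformly bounded on its support. A direct estimate of $\int_{B(x,r)}|\mu|^2/|y|\,dm$ for $x\in\mathbb{R}$ and $r>0$, splitting into the cases $r\le r_0$ and $r\ge r_0$, yields a Carleson norm $\|\nu\|_C=O(k^2)$. By Theorem~\ref{theo1bis} the operator norm $\|\mu S\|_{L^2(dm/|y|)}$ is then $O(k)$, so for $k$ small enough the Neumann series $\sum_{n\ge 0}(\mu S)^n$ converges in operator norm and inverts $I-\mu S$ on $L^2\!\left(\frac{dm}{|y|}\right)$.

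It remains to show that $\rho$ is not bilipschitz. Set $a=(1+k)/(1-k)>1$; the explicit radial map $\psi(\zeta)=\zeta|\zeta|^{a-1}$ is a homeomorphism of $\mathbb{C}$ whose Beltrami coefficient equals $k\zeta^2/|\zeta|^2$ everywhere. On $B(z_0,r_0)$ both $\rho$ and $z\mapsto\psi(z-z_0)$ are quasiconformal with the same Beltrami coefficient, so $F:=\rho\circ(\psi(\cdot-z_0))^{-1}$ has Beltrami coefficient identically zero on $\psi(B(0,r_0))$ and is therefore conformal there, with $F'(0)\neq 0$. Hence
$$
\rho(z)=F\!\bigl((z-z_0)\,|z-z_0|^{a-1}\bigr)\qquad\text{for }z\in B(z_0,r_0),
$$
which gives $|\rho(z)-\rho(z_0)|\simeq |z-z_0|^a$ as $z\to z_0$. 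Since $a>1$, the ratio $|\rho(z)-\rho(z_0)|/|z-z_0|$ tends to $0$, so $\rho^{-1}$ fails to be Lipschitz at $\rho(z_0)$ and $\rho$ is not bilipschitz. The only subtle point is producing the conformal conjugacy $F$ that transfers the power-law singularity of $\psi$ to $\rho$; everything else reduces to a direct Carleson-norm computation and a Neumann-series argument.
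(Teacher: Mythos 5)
Your construction is correct, but it is a genuinely different route from the paper's. You localize the dilatation in a ball at positive distance from $\mathbb{R}$ with $\|\mu\|_\infty=k$ small, so that $\|\nu\|_C=O(k^2)$ and invertibility follows from Theorem~\ref{theo1bis} by a Neumann series --- the same mechanism the introduction uses to recover Theorem~\ref{theoA} --- while the failure of the lower Lipschitz bound is extracted from the conformal conjugacy $\rho=F\circ\psi(\cdot-z_0)$ with the radial stretch $\psi(\zeta)=\zeta|\zeta|^{a-1}$; your identities $\mu_\psi=k\zeta^2/|\zeta|^2$ and $a=(1+k)/(1-k)$, and the Stoilow-type factorization giving $F$ conformal with $F'(0)\neq 0$, are all correct. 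The paper instead places the singularity at the origin, on $\mathbb{R}$ itself, taking $|\rho(z)|=|z|^{1/K}$ with $1<K<2$ and $\mu$ supported on two unbounded sectors; there the Carleson norm is \emph{not} small, and invertibility has to be proved by the change-of-variables scheme of Section~\ref{sec3} combined with the fact that $|z|^{-K}$ is an $A_2$ weight. Your route is more elementary and self-contained; what it gives up is that your $\mu$ vanishes near $\mathbb{R}$, so $\rho|_{\mathbb{R}}$ is still bilipschitz and $\Gamma$ is an analytic curve --- you witness only that the planar extension need not be bilipschitz, whereas the paper's example (whose boundary map $x\mapsto \pm|x|^{1/K}$ is itself non-Lipschitz) also illustrates the remark opening Section~\ref{sec5} that chord-arc curves admit non-bilipschitz parametrizations with $(I-\mu S)$ invertible.
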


\begin{proof}
Consider the following function~$f\colon \mathbb{R}\to\mathbb{R}$
$$
f(x)=\begin{cases}
x^{1/K}&x\ge 0\\
-(-x)^{1/K}&x\le 0
\end{cases}\qquad
1<K<2.
$$
Clearly $f$ is not bilipschitz, but we will show that it exists~$\rho$, a quasiconformal extension of~$f$,
verifying the required properties, that is if $\mu=\mu_{\rho}$
\begin{enumerate}
\item[(i)] $|\mu|^2/|y|$ is a Carleson measure.
\item[(ii)] $(I-\mu S)$ is invertible in $L^2\left(\frac{dm}{|y|}\right)$.
\end{enumerate}
To describe such an extension we define the sets:~$E_0=\{z\in\mathbb{C};\, |\textup{arg}z|<\pi/4\}$ and
$E_1=-E_0$. Set
$$
\rho(z)=\begin{cases}
z^{1/K}&z\in E_0\\
-(-z)^{1/K}&z\in E_1
\end{cases}
$$
and extend $\rho$ to $\mathbb{C}$ so that for all $z\in\mathbb{C}$, $|\rho(z)|=|z|^{1/K}$ and, the argument
of $\rho(z)$, $\operatorname{arg}\rho (z)$ is a piecewise linear function.
Then $\rho(z)\colon\mathbb{C}\to \mathbb{C}$ represents a homeomorphic extension of $f$, i.e.\ $\rho\bigr\rvert_{\mathbb{R}}
=f$, which is not bilipschitz.

\noindent Moreover $\mu=\mu_{\rho}$ is supported on the set $\mathbb{C}\setminus (E_0\cup E_1)$ and, as a small calculation shows, $|\mu(z)|=c(K)\chi_{\mathbb{C}\setminus (E_0\cup E_1)}$, $|c(K)|<1$, where $c(K)$ is a constant
only depending on~$K$. Therefore $\rho$ is quasiconformal and
$$
\frac{|\mu|^2}{|y|}\,dm\simeq \frac{1}{|z|}\chi_{\mathbb{C}\setminus (E_0\cup E_1)}\,dm
$$
which is a Carleson measure w.r.\ to~$\mathbb{R}$.

It remains to prove that $(I-\mu S)$ is invertible in $L^2\left(\frac{dm}{|y|}\right)$, i.e.\
\eqref{eq3.1}. For that, set
$$
(I-\mu S)h=\Phi
$$
and proceed as in the proof of (1) $\Rightarrow$ (2) in Theorem~\ref{theo2bis} in Section~\ref{sec3}.
Recall that the key there was to perform a change of variables of the form~$u=H\circ \rho^{-1}$ where $H(z)=
-\frac{1}{\pi}\int_{\mathbb{C}}\frac{h(w)}{w-z}\,dm(w)$. Then, invertibility is proved if we can show
\eqref{eq3.3}, that is:
$$
\int_{\mathbb{C}}\frac{|\partial u\circ \rho|^2}{|y|}|\mu|^2 |\partial\rho|^2\,dm\le c
\int_{\mathbb{C}}\frac{|\overline{\partial}\mu\circ \rho|^2}{|y|}|\partial\rho|^2\,dm.
$$
In our case, the integral on the left is comparable to:
\begin{equation}\label{eq5.1}
\begin{split}
\int_{\mathbb{C}\setminus (E_0\cup E_1)}|\partial u\circ\rho|^2 |\partial \rho|^2\frac{dm(z)}{|z|}&\simeq
\int_{\mathbb{C}\setminus\Phi (E_0\cup E_1)} |\partial u(z)|^2\frac{dm(z)}{|z|^K}\\*[5pt]
&\le \int_{\mathbb{C}}|\partial u(z)|^2\frac{dm(z)}{|z|^K}.
\end{split}
\end{equation}
Since $\|S\|_{L^2(\mathbb{C})\to L^2(\mathbb{C})}=1$ and $\frac{1}{|z|^K}$ is an $A_2$-weight for $1<K<2$,
we deduce that \eqref{eq5.1} is bounded by
\begin{equation*}
\begin{split}
c(K)\int_{\mathbb{C}}|\overline{\partial}u(z)|^2\frac{dm(z)}{|z|^K}&=c(K)\int_{\mathbb{C}} |\overline{\partial}u\circ \rho|^2|\partial\rho|^2\frac{dm(z)}{|z|}\\*[5pt]
&\lesssim\int_{\mathbb{C}} |\overline{\partial}u\circ \rho|^2|\partial\rho|^2\frac{dm(z)}{|y|},
\end{split}
\end{equation*}
as we needed to show.
\end{proof}

We end this section by recovering the theorem on the $L^2$ boundedness of the Cauchy
integral on chord-arc curves~\cite{D}. We follow the ideas in \cite{S} where the result is proved in the small constant case. Let us
recall the precise statement

\begin{coro}\label{coro2bis}
If $\Gamma$ is a chord-arc curve, the Cauchy integral on~$\Gamma$
is a bounded operator in~$L^2(\Gamma)$.
\end{coro}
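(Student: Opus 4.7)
The plan is to follow Semmes's strategy of translating the $L^{2}$ boundedness of the Cauchy integral on $\Gamma$ into a Beltrami-type problem pulled back to $\mathbb{R}$, and then invoke Proposition~\ref{propo1}, whose invertibility hypothesis is supplied precisely by Theorem~\ref{theo2bis} since $\Gamma$ is chord-arc. After modifying $\Gamma$ outside a large ball so that it becomes analytic at $\infty$ (the general case then follows by a standard localization of the Cauchy integral), Theorem~\ref{theo2bis} together with Lemma~\ref{lem2} furnishes a bilipschitz map $\rho\colon\mathbb{C}\to\mathbb{C}$ with $\rho(\mathbb{R})=\Gamma$ whose dilatation $\mu$ is compactly supported, $|\mu|^{2}/|y|$ is a Carleson measure, and $(I-\mu S)$ is invertible on $L^{2}(dm/|y|)$.

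Given $g\in L^{2}(\Gamma)$, I would set $f(x)=g(\rho(x))$ on $\mathbb{R}$. Since $\rho|_{\mathbb{R}}$ is bilipschitz, $|\rho'(x)|\simeq 1$ almost everywhere and therefore $\|f\|_{L^{2}(\mathbb{R})}\simeq\|g\|_{L^{2}(\Gamma)}$. Let $G=C_{\Gamma}g$ and $F=G\circ\rho$. Then $F$ is holomorphic on $\mathbb{C}\setminus\Gamma$, so $F$ satisfies $\bar{\partial}F-\mu\partial F=0$ on $\mathbb{C}\setminus\mathbb{R}$ and has jump $f$ across $\mathbb{R}$. Setting $H=F-C_{f}$ cancels the jump across $\mathbb{R}$, and a short computation (the Semmes setup described immediately before Proposition~\ref{propo1}) shows
$$\bar{\partial}H-\mu\partial H=\mu\,C'_{f}$$
in the distributional sense on all of $\mathbb{C}$. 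Proposition~\ref{propo1} then yields $\|H|_{\mathbb{R}}\|_{L^{2}(\mathbb{R})}\lesssim \|f\|_{2}$.

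Combining this with the classical $L^{2}(\mathbb{R})$ bound for $C_{f}$, I obtain $\|F|_{\mathbb{R}}\|_{L^{2}(\mathbb{R})}\lesssim\|f\|_{2}$. Changing variables back via the bilipschitz map $\rho$ gives $\|F|_{\mathbb{R}}\|_{L^{2}(\mathbb{R})}\simeq\|G|_{\Gamma}\|_{L^{2}(\Gamma)}$ and $\|f\|_{2}\simeq\|g\|_{L^{2}(\Gamma)}$, yielding $\|C_{\Gamma}g\|_{L^{2}(\Gamma)}\lesssim\|g\|_{L^{2}(\Gamma)}$, as desired.

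The main obstacle I expect is not any hard estimate --- those are all encoded in Proposition~\ref{propo1} --- but the careful identification of boundary values. One must verify that the nontangential limits of $F=G\circ\rho$ on $\mathbb{R}$ coincide with the pull-back under $\rho$ of the boundary values of $G$ on $\Gamma$, and moreover that these boundary values exist in the first place. This is handled by a density argument: first restrict $g$ to a class for which $G$ is automatically well behaved up to $\Gamma$ and $f=g\circ\rho$ lies in the class $\mathcal{F}$ used in Proposition~\ref{propo1} (so that $C'_{f}\in L^{p_{0}}$ for some $p_{0}>2$), and then extend to arbitrary $g\in L^{2}(\Gamma)$ using the just-proven a priori bound.
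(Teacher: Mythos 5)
Your proposal is correct and follows essentially the same route as the paper: pull back via the bilipschitz extension from Lemma~\ref{lem2}, form $H=F-C_f$, apply Proposition~\ref{propo1} (whose invertibility hypothesis is supplied by the chord-arc direction of Theorem~\ref{theo2bis}), and transfer the $L^2(\mathbb{R})$ bound back to $L^2(\Gamma)$ by bilipschitz invariance. The extra care you flag about normalizing $\Gamma$ at infinity and identifying boundary values via a density argument is sensible but the paper simply elides these points; there is no substantive difference in method.
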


\begin{proof}
Let $\Gamma$ be a chord-arc curve, and  $\rho$  the quasiconformal
mapping associated to~$\Gamma$ in Theorem~\ref{theo2bis}, which in fact, it is the one given by Lemma~\ref{lem2}. Therefore, $\rho$ is bilipschitz and $(I-\mu_{\rho}S)$ is invertible in $L^2(\frac{dm}{|y|})$.

Given $g\in L^2(\Gamma)$, let $G(z)= C_{\Gamma}(g)$. Since bilipschitz mappings preserve
$L^2$, the pullback function~$f=g\circ\rho$ belongs to~$L^2(\mathbb{R})$.
As it was explained in Section~\ref{sec4}, if $F(z)=G\circ \rho$, the function
$H=F-C_f$ satisfies
$$
\overline{\partial}H-\mu \partial H=\mu C'_f.
$$
By Proposition~\ref{propo1},
the boundary values ~$H\bigr\rvert_{\mathbb{R}}\in L^2(\mathbb{R})$ with~$\|H\|_{L^2(\mathbb{R})}\le c\|f\|_2$.
Thus, since $C_f$ is bounded on $L^2(\mathbb{R})$
$$
\|F_{\pm}\bigr\rvert_{\mathbb{R}}\|_{L^2(\mathbb{R})}\le
c\|f\|_{L^2(\mathbb{R})}.
$$
Again, since $\rho$ is bilipschitz, we obtain that
$$
\|G_{\pm}\|_{L^2(\Gamma)}\le c\|g\|_{L^2(\Gamma)}
$$
as we wanted to prove.
\end{proof}

\textit{Kari Astala:} Department of Mathematics and Statistics, University of Helsinki, Finland. E-mail address: kari.astala@helsinki.fi

\textit{Mar\'ia J. Gonz\'alez:} Departamento de Matem\'aticas, Universidad de C\'adiz, Spain. E-mail address: majose.gonzalez@uca.es

\end{document}